\newtheorem{theorem}{\indent Theorem}[section]
\newtheorem{lemma}[theorem]{\indent Lemma}
\newtheorem{remark}[theorem]{\indent Remark}
\newtheorem{proposition}[theorem]{\indent Proposition}
\newtheorem{defn}{Definition}[section]
\renewcommand{\l}{\langle}
\renewcommand{\r}{\rangle}
\newcommand{\dive}{{\mathrm{div}\,}}
\newcommand{\dd}{{\mathrm{d}}}
\newcommand{\uu}{{\mathbf{u}}}
\newcommand{\ff}{{\mathbf{f}}}
\newcommand{\vg}{{\mathbf{g}}}
\newcommand{\UU}{{\mathbf{U}}}
\newcommand{\FF}{{\mathbf{F}}}
\newcommand{\GG}{{\mathbf{G}}}
\newcommand{\RR}{{\mathbb{R}}}
\newcommand{\e}{\varepsilon}
\newcommand{\p}{\partial}
\newcommand{\aaa}{\alpha}
\newcommand{\calB}{\mathcal{B}}
\newcommand{\uue}{\widetilde \uu_{\e}}
\newcommand\bp{\begin{pmatrix}}
\newcommand\ep{\end{pmatrix}}
\newcommand\be{\begin{equation}}
\newcommand\ee{\end{equation}}
\newcommand\ba{\begin{equation}\begin{aligned}}
\newcommand\ea{\end{aligned}\end{equation}}
\newcommand\nn{\nonumber}
\numberwithin{equation}{section}
\title{Homogenization of evolutionary incompressible Navier-Stokes system in perforated domains}
\author{Yong Lu\footnote{Department of Mathematics, Nanjing University, Nanjing 210093, China, luyong@nju.edu.cn} \and Peikang Yang\footnote{Department of Mathematics, Nanjing University, Nanjing 210093, China, ypk@smail.nju.edu.cn}}
\date{} 
\begin{document}

\maketitle

\centerline{\em{This paper is dedicated to the memory of  Anton\'in Novotn\'y.}}

\begin{abstract}
In this paper, we consider the homogenization problems for evolutionary incompressible Navier-Stokes system in three dimensional domains perforated with a large number of small holes which are periodically located. We first establish certain uniform estimates for the weak solutions.  To overcome the extra difficulties coming from the time derivative, we use the idea of Temam \cite{ref5} and consider the equations by integrating in time variable. After suitably extending the weak solutions to the whole domain, we employ the generalized cell problem  to study the limit process. 
\end{abstract}


\section{Introducton}
\subsection{Background}
In this paper we study the homogenization of homogeneous incompressible Navier-Stokes equations in a perforated domain in $\mathbb{R}^{3}$ under Dirichlet boundary condition. Our goal is to describe the limit behavior of the (weak) solutions as the number of holes goes to infinity and the size of holes goes to zero simultaneously.

Let $\Omega  \subset {\mathbb{R}^3}$ be a bounded domain of class $C^{2,\beta}~(0<\beta<1)$. The holes in $\Omega$ are denoted by $T_{\varepsilon,k}$ which are assumed to satisfy
\begin{equation}\label{1-hole}
B(\varepsilon {x_k},{\delta _0}{a_\varepsilon}) \subset T_{\varepsilon ,k} = \varepsilon {x_k} + a_\varepsilon T_{0} \subset  \subset B(\varepsilon {x_k},{\delta _1}{a_\varepsilon})  \subset \subset B(\varepsilon {x_k},{\delta _2}{a_\varepsilon}) \subset B(\varepsilon {x_k}, \delta_3\varepsilon) \subset  \varepsilon {Q_k},
\end{equation}
where the cube $Q_k: = (-\frac{1}{2},\frac{1}{2})^3+k$ and $x_k=x_0+k$ with $x_0\in T_{0}$, for each $k\in \mathbb{Z}^3$; $T_{0}$ is a model hole which is assumed to be a closed bounded and simply connected  $C^{2,\beta}$ domain; $\delta _i, \ i = 0,1,2,3$ are fixed positive numbers. The perforation parameters $\varepsilon$ and $a_\varepsilon$ are used to measure the mutual distance of holes and the size of holes, and $\varepsilon x_k=\varepsilon x_0+\varepsilon k$ present the locations of holes. Without loss of generality, we assume that
$x_0=0$ and $0<a_\varepsilon \leqslant \varepsilon \leqslant 1$.

The perforated domain $\Omega_{\e}$ under consideration is described as follows:
\begin{equation}\label{1-domain}
\Omega _\varepsilon : = \Omega \backslash \bigcup\limits_{k \in K_\varepsilon} {T_{\varepsilon ,k}},~~~~{K_\varepsilon}:= \{ k \in \mathbb{Z}^3:  \varepsilon \overline {Q_k}  \subset \Omega \}.
\end{equation}

The study of homogenization problems in fluid mechanics have gained a lot interest.  In particular, the homogenization of Stokes system in perforated domains has been systematically studied. In 1980s, Tartar \cite{ref7} considered the case where the size of holes is proportional to the mutual distance of holes and derived Darcy's law. In 1990s,  Allaire \cite{ref3,ref4} considered general size of holes and obtained complete results in periodic setting. By introducing a local problem and employing an abstract framework the idea of which goes back to \cite{CM82},  Allaire found that the homogenized limit equations are determined by the ratio $\sigma_\varepsilon$ given as
\begin{equation}\label{1-sigma}
\sigma _\varepsilon: = \Big(\frac{\varepsilon^d}{a_\varepsilon^{d-2}}\Big)^{\frac{1}{2}},  \ d \geqslant 3;\quad{\sigma _\varepsilon}: = \varepsilon \left| \log \frac{{a_\varepsilon }}{\varepsilon} \right|^{\frac{1}{2}}, \ d = 2,
\end{equation}
where $d$ is the spatial dimension. More precisely, if ${\lim _{\varepsilon  \to 0}}{\sigma _\varepsilon }= 0$ corresponding to the case of large holes, the homogenized system is the
Darcy's law; if ${\lim _{\varepsilon  \to 0}}{\sigma _\varepsilon}= \infty$ corresponding to the case of small holes, the motion of the fluid does not change much in the homogenization process and in the limit there arise the same Stokes equations; if ${\lim _{\varepsilon  \to 0}}{\sigma _\varepsilon }= \sigma _* \in(0,\infty) $ corresponding to the case of critical size of holes, the homogenized system is governed by the Brinkman's law—a combination of the Darcy's law and the original Stokes equations.

The homogenization study is extended to more complicated models decribing fluid flows: Mikeli\'c  \cite{ref2} has studied the incompressible Navier-Stokes equations in a porous medium; Masmoudi \cite{ref12} studied the compressible Navier-Stokes equations; Feireisl, Novotn\'y and Takahashi \cite{ref13} studied the Navier-Stokes-Fourier equations. In all these previous studies, only the case where the size of holes is proportional to the mutual distance of  holes is considered and the Darcy's law is recovered in the limit. Recently, Feireisl, Namlyeyeva and Ne\v{c}asov\'a  \cite{ref14} studied the case with critical size of holes for the incompressible Navier-Stokes equations and they derived Brinkman's law; Feireisl et al. also considered the case of small holes for the compressible Navier-Stokes equations \cite{ref15,ref10,ref9}.

\medskip

In \cite{ref3,ref4}, Allaire also gave a rather complete description concerning the homogenization of stationary incompressible Navier-Stokes equations  and the results coincide with the Stokes equations: for the case of small holes, the equations remain unchanged; for the case of large holes, Darcy's law is derived; for the case of critical size of holes, Brinkmann type equations are recovered. While, for the evolutionary incompressible Navier-Stokes equations, the study is not compete with respect to the size of holes, even in periodic setting: in \cite{ref2}, Mikeli\'c considered the case when the size of holes is proportional to the mutual distance of  holes, and in \cite{ref14} the critical size of holes is considered.

In this paper we shall consider the homogenization of evolutionary incompressible Navier-Stokes equations in perforated domain $\Omega _\varepsilon$ with Dirichlet boundary condition and our goal is to give a complete description for the homogenization process related to small and large sizes of holes in periodic setting. Let $T>0$, the initial boundary problem in space-time cylinder $\Omega_{\e} \times (0,T)$ under consideration is the following:
\begin{equation}\label{1-equation}
\begin{cases}
\partial _t \uu_\varepsilon + \dive   (\uu_\varepsilon \otimes \uu_\varepsilon)
-\mu \Delta \uu_\varepsilon + \nabla {p_\varepsilon} = \ff_{\e}, &\mbox{in}\; \Omega _\varepsilon \times (0,T),\\
\dive  \uu_\varepsilon = 0, &\mbox{in}\;\Omega _\varepsilon \times (0,T),\\
\uu_\varepsilon = 0, &\mbox{on}\;\partial\Omega _\varepsilon \times (0,T),\\
\uu_\varepsilon |_{t = 0} = \uu_{\e}^0 \in L^2(\Omega_{\varepsilon};\mathbb{R}^3).
\end{cases}
\end{equation}
Here $\uu_{\e}$ is the fluid velocity field in $\RR^{3}$ and $p_{\e}$ is the fluid pressure. The external force $\ff_{\e}$ is assume to be in $L^{2}(\Omega_{\e}\times (0,T);\RR^{3})$.

We recall some  notations. Let $W^{1,q}_{0}(\Omega)$ be the collection of Sobolev functions in $W^{1,q}(\Omega)$ with zero trace, and let $W^{-1,q}$ be the dual space of $W^{1,q}_{0}(\Omega)$.  We set $V^{1,q}(\Omega):=\{v\in W_0^{1,q}(\Omega),\ \dive  v =0\}$ with $W^{1,q}$ norm and ${V^{-1,q}}(\Omega)$ be the dual space of ${V^{1,q}}(\Omega)$. Let $L^{q}_{0}(\Omega)$ be the collection of $L^{q}(\Omega)$ integrable functions that are of zero average. We sometimes use $L^r L^s$ to denote the Bochner space $L^r(0,T;L^s(\Omega))$ or $L^r(0,T;L^s(\Omega);\RR^{3})$ for short. For a function $g$ in $\Omega_{\e}$, we use the notation $\widetilde  g$ to represent its zero extension in $\Omega$:
$$
\widetilde  g = g \ \mbox{in $\Omega_{\e}$}, \quad \widetilde  g = 0 \  \mbox{in $\Omega\setminus \Omega_{\e} = \bigcup\limits_{k \in {K_\varepsilon }} {T_{\varepsilon ,k}}$}.
$$

We  now recall the definition of (finite energy) weak solutions:
\begin{defn}\label{def-weak}
We call  $\uu_\varepsilon$ a weak solution of \eqref{1-equation} in $\Omega_{\e}\times (0,T)$ provided:
\begin{itemize}
\item There holds:
\ba\label{def-weak-1}
& \uu_\varepsilon \in L^2(0,T;V^{1,2}(\Omega _\varepsilon))\cap C_{\rm weak}([0,T], L^{2}(\Omega_{\e}))\cap C([0,T], L^{q}(\Omega_{\e})),  \\
& \partial _t\uu_\varepsilon \in L^\frac{4}{3}}(0,T;{V^{ - 1,2}}({\Omega _\varepsilon) ),
\ea
for any $1\leq q <2$.

\item For any $\varphi\in C^\infty_c(\Omega_{\e} \times [0,T) ;\mathbb{R}^3)$ with $\dive  \varphi =0$, there holds
\ba\label{5}
& \int_0^T \int_{\Omega _\varepsilon} - \uu_\varepsilon  \cdot \partial_t\varphi - \uu_\varepsilon  \otimes \uu_\varepsilon :\nabla \varphi
+ \mu \nabla \uu_\varepsilon:\nabla \varphi   \dd x\dd t  \\
& = \int_0^T \int_{\Omega_\varepsilon} \ff_{\e}   \cdot \varphi \dd x\dd t + \int_{\Omega _\varepsilon} \uu_\e^0  \cdot \varphi (x,0) \,\dd x\dd t.
\ea
The pressure $p_{\e}$ is determined by
\ba\nn
\l \nabla p_{\e}, \psi \r = - \l \partial _t \uu_\varepsilon + \dive  (\uu_\varepsilon \otimes \uu_\varepsilon)
-\mu \Delta \uu_\varepsilon   - \ff_{\e}, \psi \r, \quad \forall \,\psi \in C_{c}^{\infty}(\Omega_{\e} \times (0,T);\RR^{3}).
\ea

We further call $\uu_{\e}$ a finite energy weak solution of \eqref{1-equation} provided there holds in addition the energy inequality: for a.a. $t\in  (0,T),$
\begin{equation}
\frac{1}{2}\int_{\Omega _\varepsilon} \left| \uu_\varepsilon (x,t) \right|^2 \dd x
+\int_0^t \int_{\Omega _\varepsilon} \left| \nabla \uu_\varepsilon(x,s) \right|^2 \dd x\dd s
\leqslant \int_0^t \int_{\Omega _\varepsilon} \ff_{\e} \cdot \uu_\varepsilon  \dd x\dd s
+ \frac{1}{2}\int_{\Omega _\varepsilon} \left| \uu^0_{\e}(x) \right|^2\dd x.\label{6}
\end{equation}

\end{itemize}
\end{defn}

For each fixed $\varepsilon$, the existence of a global finite energy weak solution $\uu_\varepsilon$ is known, see for example Leray's pioneer work \cite{ref18} or the classical books  \cite{ref6,ref5}. We need to investigate the behavior of the solutions as $\e \to 0.$

\subsection{Main Results}

%

From \eqref{1-sigma}, we see that the size of holes $a_\varepsilon$ is typically chosen to be $\varepsilon^\alpha~ (\alpha\geqslant 1)$ in three or higher dimensional spaces, while $a_\varepsilon$ is typically chosen to be $e^{-\varepsilon^{-\alpha}}~(\alpha>0)$ in two dimensional case.  Here for the study of three dimensional case, we shall take
\be\label{ae-se}
a_{\e} = \e^{\alpha}  \ \mbox{with} \ \alpha\geqslant 1, \ \mbox{which implies} \ \sigma_{\e} = \e^{\frac{3-\alpha}{2}}.
\ee

Throughout the paper, we will assume the zero extension of the initial datum and the external force satisfy
\be\label{ini-force}
\widetilde  \uu_{\e}^{0} \to \uu^{0}  \ \mbox{strongly in}  \ L^{2}(\Omega),\quad\widetilde  \ff_{\e} \to \ff \  \mbox{strongly in}  \ L^{2}(\Omega\times (0,T)).
\ee

Let $\uu_\varepsilon$ be a finite energy weak solution of \eqref{1-equation}.  We employ the idea of Mikeli\'c \cite{ref2} and Temam \cite{ref5} and introduce for any $t\in(0,T)$,
\begin{equation}\label{1-new-def}
\UU_\varepsilon(\cdot, t): =\int_0^t\uu_\varepsilon (\cdot, s) \dd s,\quad \Psi_{\e} (\cdot, t) :=  \int_0^t \uu_\varepsilon (\cdot, s) \otimes \uu_\varepsilon (\cdot, s) \dd s, \FF_\e(\cdot, t) : =\int_0^t\ff_\e (\cdot, s) \dd s.
\end{equation}
Then $ \UU_{\e} \in C([0, T];W^{1,2}_{0}(\Omega_\varepsilon),  {\div \UU}_\varepsilon=0, \  \FF_\e\in C([0, T];L^2(\Omega_\varepsilon))$ and $\div \Psi _\varepsilon\in C([0,T]; L^{\frac{3}{2}}(\Omega_\varepsilon))$. As shown in the proof of our theorems, instead of showing the limit behavior of $\uu_{\e}$, we turn to study the limit behavior of $\UU_{\e}$ which has better regularity in time variable. Clearly
\be\label{Fe-F-st}
\widetilde \FF_{\e} \to \FF  = \int_0^t\ff (s) \dd s \  \mbox{strongly in}  \ L^{2}(\Omega \times (0,T)).
\ee
The classical theory on Stokes equations implies that there exists $P_\varepsilon\in C([0,T];L^2_{0}(\Omega_\varepsilon))$ (see Chapter 3 in \cite{ref5}), such that for any $t\in (0,T)$,
\begin{equation}\label{2-equa}
\uu_\varepsilon(t) - \uu^0_\e + \dive\Psi _\varepsilon(t) - \mu \Delta \UU_\varepsilon (t)+\nabla P_\varepsilon(t) = \FF_\e(t),\quad\text{in}~W^{ - 1,2}(\Omega _\varepsilon).
\end{equation}

Now we state our results corresponding to different sizes of holes. The case of critical size of holes is considered by Feireisl-Namlyeyeva-Ne\v{c}asov\'a in \cite{ref14}, so we are focusing only on the case of small holes and large holes.  Note that the limits are taken up to possible extractions of subsequences. The first result corresponds to the case of small holes:
\begin{theorem}\label{thm-1}
Let $\uu_\varepsilon$ be a finite energy weak solution of the Navier-Stokes system \eqref{1-equation} in the sense of Definition \ref{def-weak} with initial datum and external force satisfying \eqref{ini-force}. Let $\widetilde  p_\e$ be the extension of $p_\e$ defined by $\widetilde  p_\varepsilon = \partial _t\widetilde P_\varepsilon$  where $\widetilde P_\varepsilon$ is the extension of $ P_{\e}$ defined in \eqref{def-F-P} and \eqref{small-F-P}. If $\alpha>3$, i.e. $\mathop {\lim }\limits_{\varepsilon \to\infty} \sigma _\varepsilon = \infty$, then
\ba\label{thm1-conv}
{\widetilde {\uu}_\varepsilon } \to {\uu} \ \mbox{weakly(*) in} \  L^{\infty}(0,T;L^{2}(\Omega)) \cap L^2(0,T;W_0^{1,2}(\Omega)),
\ea
and
\ba\label{thm1-conv-p}
\widetilde p_\varepsilon \to p\ \mbox{weakly\ in} \ W^{-1,2} (0,T;L_0^2(\Omega)).
\ea
Moreover, $(\uu,p)$ is a weak solution of the Navier-Stokes equations in homogeneous domain $\Omega$:
\begin{equation}
\begin{cases} \label{equa-u-p-small}
\partial_t \uu + \dive (\uu \otimes \uu) - \mu \Delta \uu + \nabla p = \ff,&\ \mbox{in} \;\Omega \times (0,T) ,\\
\dive \uu = 0, &\ \mbox{in} \;{\Omega} \times  (0,T),\\
\uu= 0, &\ \mbox{on} \; \partial\Omega \times (0,T),\\
\uu{|_{t=0}} = \uu^0.
\end{cases}
\end{equation}
\end{theorem}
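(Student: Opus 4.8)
The plan is to run the standard homogenization scheme---uniform estimates, weak limits, strong compactness of the velocity for the convective term, and passage to the limit through a restriction operator---adapted to the time-dependent perforated setting by means of the time-integrated formulation \eqref{2-equa}. First I would extract the uniform bounds from the energy inequality \eqref{6}: using \eqref{ini-force}, the Cauchy-Schwarz and Young inequalities and Gronwall's lemma, the quantities $\|\uu_\e\|_{L^\infty(0,T;L^2(\OO_\e))}$ and $\|\nabla\uu_\e\|_{L^2(0,T;L^2(\OO_\e))}$ are bounded independently of $\e$. Because the zero extension of a field in $W_0^{1,2}(\OO_\e)$ lies in $W_0^{1,2}(\OO)$ with the same gradient and $L^2$ norms, $\widetilde\uu_\e$ is bounded in $L^\infty(0,T;L^2(\OO))\cap L^2(0,T;W_0^{1,2}(\OO))$, and $\dive\widetilde\uu_\e=0$ in $\OO$ since the zero extension of a solenoidal field vanishing on $\p\OO_\e$ stays solenoidal across the holes. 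Extracting a subsequence yields the weak(*) convergence \eqref{thm1-conv} to a limit $\uu\in L^2(0,T;V^{1,2}(\OO))$.

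The heart of the argument is the strong convergence $\widetilde\uu_\e\to\uu$ in $L^2(\OO\times(0,T))$ needed to identify the convective term $\uu_\e\otimes\uu_\e$. Here I would use the Temam device: the primitive $\UU_\e$ satisfies the elliptic-in-space relation \eqref{2-equa} pointwise in $t$, with $\UU_\e$, $\Psi_\e$, $\FF_\e$ and $P_\e$ all continuous in time, so that no explicit time derivative appears. Since $\UU_\e(t)=\int_0^t\uu_\e$ is Lipschitz in time with values in $L^2$ and uniformly bounded in $W_0^{1,2}(\OO_\e)$, the extension $\widetilde\UU_\e$ is precompact in $C([0,T];L^2(\OO))$ by Arzel\`a-Ascoli together with the compact embedding $W_0^{1,2}(\OO)\hookrightarrow\hookrightarrow L^2(\OO)$, giving $\widetilde\UU_\e\to\UU$ strongly there. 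To upgrade this to strong $L^2L^2$ convergence of the velocity itself, I would establish uniform-in-$\e$ time-equicontinuity of the solenoidal projections $t\mapsto\int_{\OO_\e}\uu_\e(t)\cdot R_\e\phi$ from the momentum balance (on which the pressure is inert and $\p_t\uu_\e$ is bounded in $L^{4/3}(0,T;V^{-1,2}(\OO_\e))$), transfer it to the whole domain using $R_\e\phi\to\phi$, and combine it with the uniform $L^2W_0^{1,2}$ bound through an Aubin-Lions-Simon compactness argument. This yields $\widetilde\uu_\e\otimes\widetilde\uu_\e\to\uu\otimes\uu$ in $L^1(\OO\times(0,T))$.

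To derive the limit system I would feed solenoidal test functions through the restriction operator $R_\e:W_0^{1,2}(\OO)\to W_0^{1,2}(\OO_\e)$, which preserves the divergence-free constraint. For a solenoidal $\varphi\in C_c^\infty(\OO\times[0,T))$ the spatial field $R_\e\varphi$ is admissible in \eqref{5}, and in the regime $\sigma_\e\to\infty$ its corrections near the holes are controlled by $\sigma_\e^{-1}$, so $R_\e\varphi\to\varphi$ strongly in $W_0^{1,2}(\OO)$. Inserting $\varphi_\e=R_\e\varphi$ into \eqref{5} kills the pressure term by solenoidality, and passing to the limit term by term---using the weak convergence of $\widetilde\uu_\e$ and $\nabla\widetilde\uu_\e$, the strong convergence of the convective term from the previous step, and $R_\e\varphi\to\varphi$---recovers the weak formulation of \eqref{equa-u-p-small} on $\OO$. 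The attainment of the initial datum $\uu^0$ follows from \eqref{ini-force} and the $C_{\rm weak}$ time regularity of $\uu$.

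Finally, for \eqref{thm1-conv-p} I would work with the time-integrated pressure $P_\e$ in \eqref{2-equa}, construct its extension $\widetilde P_\e$ by the dual/Bogovskii argument recorded in \eqref{def-F-P}--\eqref{small-F-P}---whose uniform bounds again rely on $\sigma_\e\to\infty$---and show $\widetilde P_\e\to P$ weakly in $L^2(0,T;L_0^2(\OO))$; since time differentiation maps $L^2(0,T;\cdot)$ continuously into $W^{-1,2}(0,T;\cdot)$, this gives $\widetilde p_\e=\p_t\widetilde P_\e\to p=\p_t P$ weakly in $W^{-1,2}(0,T;L_0^2(\OO))$, and de Rham's theorem identifies $p$ as the pressure associated with the limit momentum balance. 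I expect the principal obstacle to be exactly the strong compactness of the velocity: the zero extension of $\p_t\uu_\e$ across the holes is not directly controlled, which is why the time-integration of \eqref{1-equation} into \eqref{2-equa} is indispensable, while the quantitative smallness $\sigma_\e^{-1}\to0$ of the restriction and pressure corrections is what forces the homogenized system to coincide with the unperturbed Navier-Stokes equations on $\OO$.
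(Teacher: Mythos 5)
Your proposal is correct in outline but takes a genuinely different route from the paper at the two decisive steps. For compactness, the paper never proves strong $L^2(0,T;L^2(\OO))$ convergence of $\widetilde \uu_\e$: instead it shows (Proposition \ref{moment-equa}, via the cut-off $g_\e$ combined with the Bogovskii operator $\calB_\e$ of Lemma \ref{lem-Bog}) that $\widetilde \uu_\e$ satisfies the Navier--Stokes equations on $\OO$ up to an $O(\e^\sigma)$ remainder, extracts from this a time-derivative bound on a piece $\widetilde \uu_\e^{(1)}$ of a decomposition $\widetilde \uu_\e=\widetilde \uu_\e^{(1)}+\e^\sigma\widetilde \uu_\e^{(2)}$, and applies Lions' product-limit lemma to get weak $L^{4/3}L^2$ convergence of the convective term. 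You instead aim for strong convergence of the velocity itself, via equicontinuity of the projections $t\mapsto\int_{\OO_\e}\uu_\e(t)\cdot R_\e\phi\,\dd x$ and an Aubin--Lions--Simon argument; this is viable precisely because for $\alpha>3$ the restriction operator is uniformly bounded ($\sigma_\e^{-1}\leqslant 1$ in \eqref{pt-res}), and it buys a stronger conclusion than the paper needs. For the limit system, the paper tests the time-integrated equation \eqref{2-equa} with the cell-problem solutions $w^i_{\eta,\e}\phi$ and then inverts the positive definite matrix $A$ (Proposition \ref{thm-UP-crit}), a scheme chosen because it is uniform across hole sizes and is reused verbatim for Theorem \ref{thm-3}; you test \eqref{5} directly with $R_\e\varphi$, which is the more economical choice for small holes since no Darcy matrix should survive.

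Two of your assertions need genuine proof rather than citation of \eqref{pt-res}. First, in the viscous term $\int\nabla\widetilde\uu_\e:\nabla R_\e\varphi$ both factors converge only weakly, so you need $\nabla R_\e\varphi\to\nabla\varphi$ strongly in $L^2(\OO)$ (and similarly $\widetilde{R_\e\phi}\to\phi$ strongly in $L^2$ for the projection transfer). This is true in the regime $\alpha>3$, with correction of size $O(\sigma_\e^{-1})$, but it does not follow from the three abstract properties \eqref{pt-res} listed in the paper; it requires Allaire's explicit construction, and the paper's cut-off/Bogovskii decomposition in Proposition \ref{moment-equa} is exactly the self-contained substitute, with explicit rate $\e^\sigma$. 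Second, \eqref{thm1-conv-p} is not just ``de Rham plus weak continuity of $\p_t$'': writing $p=\p_t P$ you must identify the weak $L^2L^2_0$ limit $P$ of $\widetilde P_\e$ with a time-primitive of the pressure of the limit momentum balance, and since your limit equation was derived only against solenoidal test functions, this identification forces a second limit passage in the defining relation \eqref{def-F-P} against non-solenoidal $\varphi$ (again using strong convergence of $R_\e\varphi$). The paper gets this identification for free, because its test functions $w^i_{\eta,\e}\phi$ are not divergence-free, so the pressure term survives the limit passage and $\nabla P$ appears directly in \eqref{equa-U-P-small}. Both points are fixable with the tools you invoke, so I regard them as gaps in completeness rather than errors of strategy.
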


\medskip

For the case of large holes, we consider the time-scaled Navier-Stokes system:
\begin{equation}\label{1-time-equa}
\begin{cases}
\sigma_\varepsilon^2 \partial _t \uu_\varepsilon  + \dive (\uu_\varepsilon \otimes \uu_\varepsilon)
-\mu \Delta \uu_\varepsilon  + \nabla p_\varepsilon = \ff, &\ \mbox {in} \;\Omega_\varepsilon \times (0,T),\\
\dive\uu_\varepsilon = 0, &\ \mbox{in} \; \Omega _\varepsilon \times (0,T),\\
\uu_\varepsilon = 0, &\ \mbox{on} \; \partial\Omega _\varepsilon \times (0,T),\\
\uu_\varepsilon |_{t = 0} = \uu^0_{\e}\in L^2(\Omega_\e;\mathbb{R}^3).
\end{cases}
\end{equation}

For a solution $\uu_{\e}$ to \eqref{1-time-equa},  we similarly introduce $\UU_{\e},  \Psi_{\e}, \FF_{\e}$ as in \eqref{1-new-def}.  There exists $P_\varepsilon\in C([0,T];L^2_{0}(\Omega_\varepsilon))$, such that for any $t\in (0,T)$,
\begin{equation}\label{2-equa-large-holes}
\sigma_{\e}^{2} \uu_\varepsilon(t) - \sigma_{\e}^{2} \uu^0_\e+\Phi _\varepsilon(t) - \mu \Delta \UU_\varepsilon (t)+\nabla P_\varepsilon(t) = \FF_\e(t),\quad\text{in}~W^{ - 1,2}(\Omega _\varepsilon).
\end{equation}

We have the following theorem concerning the case of large holes:
\begin{theorem}\label{thm-3}
Let $\uu_\varepsilon$ be a finite energy weak solution of the time-scaled Navier-Stokes system \eqref{1-time-equa} in the sense of Definition \ref{def-weak} with initial datum and external force satisfying \eqref{ini-force}. Let $\widetilde  p_\e$ be the extension of $p_\e$ defined by $\widetilde  p_\varepsilon = \partial _t\widetilde P_\varepsilon$ where $\widetilde P_\varepsilon$ is the extension of $ P_{\e}$ defined in \eqref{large-F-P-1} and \eqref{large-F-P-2}.  If $1<\alpha<3$, i.e. $\lim _{\varepsilon  \to 0}{\sigma _\varepsilon}=0$, then
\ba\label{thm2-u}
\sigma_{\e}^{-2} \uue \to {\uu}  \ \mbox{weakly in} \ {L^2}(0,T;L^{2}(\Omega ;{\mathbb{R}^3})) ,
\ea
and
\ba\label{thm2-p-1}
\widetilde p_\varepsilon=\widetilde p_\varepsilon^{(1)}+\sigma_\e^{\frac12}\widetilde p_\varepsilon^{(2)}+\sigma_\e^2\widetilde p_\varepsilon^{(3)},
\ea
with
\ba\label{thm2-p-2}
&\widetilde p_\varepsilon^{(1)}  \to p \ \mbox{weakly in} \ L^2 (0,T;W^{1,2}(\Omega)),\\
&\widetilde p_\varepsilon^{(2)} \ \mbox{bounded in} \ L^\frac{4}{3}(0,T;L_0^2(\Omega)),\\
&\widetilde p_\varepsilon^{(3)}  \ \mbox{bounded in} \ W^{-1,2} (0,T;L_0^2(\Omega)).
\ea
Moreover $({\uu},p)$ is a weak solution of the Darcy's law:
\begin{equation}\label{Darcy}
\begin{cases}
\mu \uu = A({\ff} - \nabla p),&\ \mbox{in} \;\Omega  \times  (0,T),\\
\dive\uu = 0, &\ \mbox{in} \;\Omega \times (0,T) ,\\
\uu\cdot \mathbf{n} = 0, &\ \mbox{on} \;\partial\Omega \times (0,T),
\end{cases}
\end{equation}
where $\mathbf{n}$ is the unit normal vector on the boundary of $\Omega$.

\end{theorem}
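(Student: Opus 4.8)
The plan is to combine Temam's device of integrating the equations in time with Allaire's cell-problem (local problem) machinery, adapted to the scaled system \eqref{1-time-equa}. I would begin by testing the scaled momentum equation against $\uu_\varepsilon$; incompressibility and the no-slip condition kill the convective and pressure terms, leaving the energy inequality
\[
\frac{\sigma_\e^2}{2}\,\|\uu_\varepsilon(t)\|_{L^2(\Omega_\varepsilon)}^2 + \mu\int_0^t\|\nabla\uu_\varepsilon\|_{L^2(\Omega_\varepsilon)}^2\,\dd s \leq \int_0^t\int_{\Omega_\varepsilon}\ff\cdot\uu_\varepsilon\,\dd x\,\dd s + \frac{\sigma_\e^2}{2}\,\|\uu_\e^0\|_{L^2(\Omega_\varepsilon)}^2 .
\]
The decisive ingredient is the Poincar\'e inequality on the perforated domain, $\|v\|_{L^2(\Omega_\varepsilon)}\leq C\,\sigma_\e\,\|\nabla v\|_{L^2(\Omega_\varepsilon)}$ for $v\in W_0^{1,2}(\Omega_\varepsilon)$, which I would use to dominate the forcing term and absorb it into the viscous term. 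This produces $\widetilde\uu_\varepsilon$ bounded in $L^\infty(0,T;L^2(\Omega))$, $\|\nabla\widetilde\uu_\varepsilon\|_{L^2 L^2}=O(\sigma_\e)$, and, via a second application of Poincar\'e, $\sigma_\e^{-2}\widetilde\uu_\varepsilon$ bounded in $L^2(0,T;L^2(\Omega))$. A subsequence then converges weakly to the candidate $\uu$ of \eqref{thm2-u}; because each $\widetilde\uu_\varepsilon$ is divergence-free with vanishing trace on $\partial\Omega$, testing against gradients of functions in $C^\infty(\overline\Omega)$ and passing to the limit yields $\dive\uu=0$ in $\Omega$ and $\uu\cdot\mathbf{n}=0$ on $\partial\Omega$.

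Next I would pass to the limit in the time-integrated identity \eqref{2-equa-large-holes}, recalling $\Phi_\varepsilon=\dive\Psi_\varepsilon$. The uniform bounds show that the scaled acceleration $\sigma_\e^2(\uu_\varepsilon-\uu_\e^0)$ is $O(\sigma_\e^2)$ in $L^\infty L^2$, while the integrated convective term is of the same small order: indeed $\|\uu_\varepsilon\otimes\uu_\varepsilon\|_{L^1 L^3}\lesssim\|\widetilde\uu_\varepsilon\|_{L^2 L^6}^2=O(\sigma_\e^2)$ by the Sobolev embedding $W_0^{1,2}\hookrightarrow L^6$, so $\Psi_\varepsilon=O(\sigma_\e^2)$ in $L^\infty L^3$ and $\Phi_\varepsilon$ is negligible against $\FF_\varepsilon=O(1)$. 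Hence, at leading order, the effective balance is the stationary Stokes one $-\mu\Delta\UU_\varepsilon+\nabla P_\varepsilon\approx\FF_\varepsilon$, whose homogenization is dictated by the generalized cell problem. I would introduce Allaire's restriction operator $R_\varepsilon$ together with the family of correctors $\vg_\varepsilon^i$ and associated pressures $q_\varepsilon^i$ solving the local problem in direction $e_i$, satisfying $\vg_\varepsilon^i\rightharpoonup e_i$, $\dive\vg_\varepsilon^i=0$, $\vg_\varepsilon^i=0$ on the holes, and $-\mu\Delta\vg_\varepsilon^i+\nabla q_\varepsilon^i=\mu\,\sigma_\e^{-2}(A^{-1})_{ij}e_j$ in the distributional sense, where $A$ is the permeability matrix. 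Testing \eqref{2-equa-large-holes} against $\vg_\varepsilon^i\,\psi(t)$, integrating by parts to move the Laplacian onto the corrector, and using $\sigma_\e^{-2}\UU_\varepsilon\rightharpoonup\UU=\int_0^t\uu\,\dd s$ together with the strong convergence $\widetilde\FF_\varepsilon\to\FF$ of \eqref{Fe-F-st}, the viscous term yields $\mu A^{-1}\UU$ while the remaining contributions give $\FF-\nabla P$; this produces the time-integrated Darcy relation $\mu\UU=A(\FF-\nabla P)$, and differentiation in $t$ recovers \eqref{Darcy} with $p=\partial_t P$.

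For the pressure statements \eqref{thm2-p-1}--\eqref{thm2-p-2} I would reconstruct $P_\varepsilon$ from \eqref{2-equa-large-holes}: composing with the restriction operator lets one test against non-solenoidal fields and thereby control $\nabla P_\varepsilon$ and extend $P_\varepsilon$ to $\widetilde P_\varepsilon$ on all of $\Omega$ uniformly in $\varepsilon$. Splitting $P_\varepsilon$ according to the three source terms of \eqref{2-equa-large-holes}---the $O(1)$ viscous and forcing part, the convective part, and the acceleration part---yields three pieces at their natural orders $1,\ \sigma_\e^{1/2},\ \sigma_\e^2$ as dictated by the corresponding uniform estimates; differentiating the extensions in time then gives $\widetilde p_\varepsilon=\widetilde p_\varepsilon^{(1)}+\sigma_\e^{1/2}\widetilde p_\varepsilon^{(2)}+\sigma_\e^2\widetilde p_\varepsilon^{(3)}$ with $\widetilde p_\varepsilon^{(1)}\rightharpoonup p$ in $L^2 W^{1,2}$ and the remaining pieces bounded in $L^{4/3}L_0^2$ and $W^{-1,2}L_0^2$ respectively.

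I expect the pressure analysis to be the main obstacle. The convective term $\dive(\uu_\varepsilon\otimes\uu_\varepsilon)$ lives only in a negative-order space over the perforated domain, so building the extension $\widetilde P_\varepsilon$ and justifying the scale-by-scale splitting \eqref{thm2-p-1} demands an extension/restriction operator (in the spirit of Tartar and Allaire) that is uniform in $\varepsilon$, compatible with the holes, and fine enough to separate the three orders; checking that each piece actually lands in the claimed Bochner space is delicate. The second subtle point is the homogenization of the viscous term through the correctors---identifying the permeability matrix $A$ and controlling the weak limit of the product $\nabla\UU_\varepsilon:\nabla\vg_\varepsilon^i$ in the presence of the holes---which is precisely where the fine properties of the generalized cell problem are essential.
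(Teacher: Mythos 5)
Your proposal follows essentially the same route as the paper: Temam's time-integration device, the energy inequality combined with the perforated Poincar\'e inequality to obtain $\|\uue\|_{L^\infty L^2}\leqslant C$, $\|\nabla\uue\|_{L^2L^2}\leqslant C\sigma_\e$, $\|\uue\|_{L^2L^2}\leqslant C\sigma_\e^2$, a restriction-operator dual formulation to extend the pressure and split it into three pieces of orders $1,\ \sigma_\e^{1/2},\ \sigma_\e^2$ landing in the claimed Bochner spaces, and finally cell correctors as test functions in the time-integrated equation, with differentiation in $t$ at the end. Your corrector normalization ($\vg_\e^i\rightharpoonup e_i$ with $A^{-1}$ on the right-hand side) is equivalent to the paper's ($w_{\eta,\e}^i\rightharpoonup \overline w^i$ with $A_{ij}=(\overline w^j)_i$) by taking linear combinations, since $A$ is symmetric and positive definite.

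There is, however, a genuine flaw in your limit-passage step as written. You propose to test \eqref{2-equa-large-holes} against $\vg_\e^i\,\psi(t)$ with $\psi$ depending only on time. First, this is not an admissible test function: the correctors are periodic and do not vanish on $\partial\Omega$, whereas \eqref{2-equa-large-holes} is an identity in $W^{-1,2}(\Omega_\e)$, so test functions must lie in $W_0^{1,2}(\Omega_\e)$. Second, and more seriously, since $\dive \vg_\e^i=0$ and $\psi$ is spatially constant, the field $\vg_\e^i\,\psi(t)$ is divergence free, so the pressure term $\langle \nabla P_\e, \vg_\e^i\psi\rangle$ vanishes identically; your scheme therefore cannot produce the term $\nabla P$ in the limit, contrary to your claim that ``the remaining contributions give $\FF-\nabla P$''. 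The repair is exactly what the paper does: test with $w_{\eta,\e}^i\phi$, where $\phi\in C_c^\infty(\Omega\times(0,T))$ is a \emph{scalar space-time} cutoff. Then $\dive(w_{\eta,\e}^i\phi)=w_{\eta,\e}^i\cdot\nabla\phi\neq 0$ picks up the pressure, and one must control the resulting commutator terms --- in particular $\int\nabla\widetilde\UU_\e:\nabla\phi\otimes w_{\eta,\e}^i$, $\int\nabla\phi\otimes\widetilde\UU_\e:\nabla w_{\eta,\e}^i$ and the cell-pressure term $\e^{-1}\int \dive(\phi\widetilde\UU_\e)\,q_{\eta,\e}^i$, all of which are $O(\sigma_\e)$ by the corrector bounds \eqref{lem-cell-2} and $\e^{-1}c_\eta=\sigma_\e^{-1}$ --- and, to pass to the limit in $\int\widetilde P_\e^{(1)} w_{\eta,\e}^i\cdot\nabla\phi$, one needs the \emph{strong} $L^2L^2$ convergence of $\widetilde P_\e^{(1)}$ (obtained by compact embedding from its $W^{1,2}(0,T;W^{1,2}(\Omega))$ bound), since the corrector converges only weakly. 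With these corrections your argument coincides with the paper's proof.
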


Here in \eqref{Darcy}, the permeability tensor $A$ is a constant positive definite matrix (see \cite{Allaire91}) given by:
\[A_{i,j}: = \mathop {\lim}\limits_{\eta \to 0} c_\eta ^{- 2}\int_{Q_\eta} \nabla  w_\eta ^i:\nabla w_\eta ^j\dd x
= \mathop {\lim }\limits_{\eta \to 0} \int_{Q_\eta} {(w_\eta ^j)}_i \dd x: = ({\overline  w}^j)_i,\]
where $w_\eta ^i$ satisfies the generalized cell problem introduced in Section \ref{sec:cell}.

\begin{remark}
The case $\alpha=1$ is considered by Mikeli\'c \cite{ref2}.  In this case the permeability tensor $A_{i,j}$ is defined by the classical cell problem where $\eta = 1$ (see  also \cite{ref7}) which is slightly different from the definition above.
\end{remark}

 Sections \ref{sec-thm1} and  \ref{sec-thm2} are devoted to the proofs of Theorem \ref{thm-1} and Theorem \ref{thm-3} respectively. In the sequel,  $C$ denotes a constant independent of $\varepsilon$, while its value may differ from line to line.

\subsection{Generalized cell problem}\label{sec:cell}
We introduce the idea of \textit{generalized cell problem} \cite{ref19, ref1,ref7} which is used to study the homogenization process. Near each single hole, after a scaling of size $\varepsilon^{-1}$ such that the controlling cube becomes the $O(1)$ size,  one can consider the following \textit{modified cell problem}:
\[\begin{cases}
-\Delta w_\eta ^i + \nabla q_\eta ^i = c_\eta ^2{e^i},&\text{in}\;Q_\eta : = {Q_0}\backslash (\eta T_{0}),\\
\dive w_\eta ^i = 0, & \mbox{in}\;Q_\eta,\\
w_\eta ^i = 0, & \mbox{on}\;\eta T_{0},\\
(w_\eta ^i,q_\eta ^i)   \ \mbox{is  $Q_0$-periodic}.
\end{cases}\]
Here $Q_{0} = \big(-\frac 12, \frac 12\big)^{d}, \ \eta : = \frac{a_\varepsilon}{\varepsilon},\ c_\eta := \frac{\varepsilon }{\sigma _\varepsilon}$, and $\{ e^i\}_{i = 1,...,d}$ is the standard Euclidean coordinate of ${\mathbb{R}^d}$. Clearly ${c_\eta } \to 0$ when $\eta  \to 0$.  When $a_\varepsilon$ is proportional to $\varepsilon$, $\eta$ becomes a positive constant independent of $\varepsilon$ and $Q_\eta$ becomes a fixed domain of type ${Q_0}\backslash T_{0}$; this is the case considered by Tartar \cite{ref7}.

For each fixed $\eta>0$, the generalized cell problem admits a unique regular solution. We now recall two lemmas concerning the estimates of the cell problems. The proofs can be found in \cite{ref1}.

\begin{lemma}
The solution $(w^i_\eta, q_\eta^i)$ of the generalized cell problem has the estimates:
\ba\label{lem-cell-1}
\|  \nabla w_{\eta}^i\| _{L^2(Q_\eta)}\leqslant C c_\eta,\quad\|  w_\eta ^i\| _{L^2(Q_\eta)} \leqslant C,\quad
\|  q_{\eta}^i\| _{L^2(Q_\eta)}\leqslant C c_\eta.
\ea
\end{lemma}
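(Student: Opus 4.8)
The plan is to establish the three estimates in \eqref{lem-cell-1} directly from the weak formulation of the generalized cell problem, exploiting the structure $-\Delta w_\eta^i + \nabla q_\eta^i = c_\eta^2 e^i$ together with the rescaled Poincaré inequality on the perforated cell $Q_\eta$. First I would test the momentum equation against $w_\eta^i$ itself: since $\dive w_\eta^i = 0$ and $(w_\eta^i, q_\eta^i)$ is $Q_0$-periodic with $w_\eta^i$ vanishing on $\eta T_0$, integration by parts makes the pressure term drop out (the periodic boundary terms cancel and $\int_{Q_\eta} q_\eta^i \dive w_\eta^i = 0$), leaving the energy identity
\be\nn
\int_{Q_\eta} |\nabla w_\eta^i|^2 \,\dd x = c_\eta^2 \int_{Q_\eta} e^i \cdot w_\eta^i \,\dd x.
\ee

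The heart of the argument is then a Poincaré-type inequality uniform in $\eta$. Because $w_\eta^i = 0$ on the hole $\eta T_0 \subset Q_0$, one has a constant $C$ independent of $\eta$ (for $\eta$ bounded away from, say, filling the whole cell) such that $\| w_\eta^i \|_{L^2(Q_\eta)} \leqslant C \| \nabla w_\eta^i \|_{L^2(Q_\eta)}$; this is the scaled analogue of the inequality used in the Allaire/Tartar framework and is where the hole's presence is essential. Applying Cauchy--Schwarz to the right-hand side of the energy identity and then this Poincaré inequality gives
\be\nn
\| \nabla w_\eta^i \|_{L^2(Q_\eta)}^2 \leqslant c_\eta^2 \| w_\eta^i \|_{L^2(Q_\eta)} \leqslant C c_\eta^2 \| \nabla w_\eta^i \|_{L^2(Q_\eta)},
\ee
from which $\| \nabla w_\eta^i \|_{L^2(Q_\eta)} \leqslant C c_\eta$ follows upon dividing, and then $\| w_\eta^i \|_{L^2(Q_\eta)} \leqslant C c_\eta \leqslant C$ follows from Poincaré (noting $c_\eta \to 0$, so in particular $c_\eta$ is bounded).

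For the pressure estimate I would rely on an inf-sup (Bogovskii / Nečas) bound: on the perforated cell with periodic-and-hole boundary conditions, the mean-zero pressure is controlled by the $H^{-1}$ norm of $\nabla q_\eta^i$, with a constant uniform in $\eta$. Reading the momentum equation as $\nabla q_\eta^i = c_\eta^2 e^i + \Delta w_\eta^i$ and pairing against a divergence-correcting test field $v$ with $\dive v = q_\eta^i$ and $\| \nabla v \|_{L^2} \leqslant C \| q_\eta^i \|_{L^2}$ yields $\| q_\eta^i \|_{L^2(Q_\eta)}^2 \leqslant (c_\eta^2 \|e^i\|_{L^2} + \|\nabla w_\eta^i\|_{L^2}) \| \nabla v \|_{L^2} \leqslant C c_\eta \| q_\eta^i \|_{L^2}$, giving $\| q_\eta^i \|_{L^2(Q_\eta)} \leqslant C c_\eta$. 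The main obstacle, and the point requiring the most care, is the \emph{uniformity in $\eta$} of both the Poincaré constant and the Bogovskii/inf-sup constant on the variable domains $Q_\eta$: as $\eta \to 0$ the hole shrinks, so one must verify that these constants do not blow up. Since the statement attributes the proof to \cite{ref1}, I would invoke the uniform extension and restriction estimates established there for the family $\{Q_\eta\}$, which is precisely the technical input that makes the constants $\eta$-independent.
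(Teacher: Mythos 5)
Your overall architecture (energy identity for $w_\eta^i$, a Poincar\'e inequality on $Q_\eta$, then an inf-sup/Bogovskii duality for $q_\eta^i$) is the natural one; note the paper itself does not prove this lemma but defers to \cite{ref1}, so the comparison is against that standard argument. However, your central ingredient --- a Poincar\'e constant on $Q_\eta$ that is \emph{uniform in $\eta$} for functions vanishing only on the shrinking hole --- is false, and this is precisely the point where the scaling $c_\eta$ enters. For $u$ that is $Q_0$-periodic and vanishes on $\eta T_0$, the sharp constant is dictated by the capacity of the hole: in dimension $3$ one has $\mathrm{cap}(\eta T_0)\sim \eta = c_\eta^2$, and the correct inequality is $\|u\|_{L^2(Q_\eta)}\leqslant C\,c_\eta^{-1}\|\nabla u\|_{L^2(Q_\eta)}$ with $c_\eta^{-1}=\eta^{-1/2}\to\infty$. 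Sharpness is seen from the truncated capacity potential ($u=0$ on $\eta T_0$, $u\approx 1-\delta_1\eta/|x|$ nearby, $u\equiv 1$ away from the hole), for which $\|\nabla u\|_{L^2}^2\sim\eta$ while $\|u\|_{L^2}\sim 1$. Equivalently, the ``scaled analogue'' of the Allaire--Tartar inequality that you invoke is Lemma \ref{lem-Poincare} of the paper, $\|u\|_{L^2(\Omega_\e)}\leqslant C\sigma_\e\|\nabla u\|_{L^2(\Omega_\e)}$, and rescaling a cell of size $\e$ to unit size produces the constant $\sigma_\e/\e=c_\eta^{-1}$, not $C$. The error is not cosmetic: with your uniform constant the chain gives $\|\nabla w_\eta^i\|_{L^2}\leqslant Cc_\eta^2$ and hence $\|w_\eta^i\|_{L^2}\leqslant Cc_\eta^2\to 0$, which would force $\overline w^i=0$ and $A=0$, contradicting the positive definiteness of the permeability tensor that Theorem \ref{thm-3} relies on. (Separately, dividing in your displayed inequality yields $\|\nabla w_\eta^i\|\leqslant Cc_\eta^2$, not $Cc_\eta$; that slip is harmless only because $c_\eta$ is bounded.)

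The fix is to replace your inequality by the capacity-scaled one, after which the argument does deliver exactly \eqref{lem-cell-1}: the energy identity and Cauchy--Schwarz give
\be\nn
\|\nabla w_\eta^i\|_{L^2(Q_\eta)}^2\leqslant c_\eta^2\|w_\eta^i\|_{L^2(Q_\eta)}\leqslant C c_\eta^2\cdot c_\eta^{-1}\|\nabla w_\eta^i\|_{L^2(Q_\eta)},
\ee
so $\|\nabla w_\eta^i\|_{L^2}\leqslant Cc_\eta$, and then $\|w_\eta^i\|_{L^2}\leqslant Cc_\eta^{-1}\|\nabla w_\eta^i\|_{L^2}\leqslant C$; note the two estimates are coupled through $c_\eta^{-1}$ and neither can be improved. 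Your pressure step is then essentially sound, but the uniformity you assert there also needs justification of a different kind: taking the corrector $v\in W_0^{1,2}(Q_\eta;\RR^3)$ with $\dive v=q_\eta^i$, the Poincar\'e constant for $v$ \emph{is} uniform (because $v$ vanishes on the fixed outer boundary $\partial Q_0$, not merely on the hole), and the $L^2$-Bogovskii constant on the single cell $Q_0\setminus\eta T_0$ is indeed uniform in $\eta$ in dimension $3$ --- one solves on $Q_0$ and corrects near the hole with a cutoff at scale $\eta$, the correction costing $\eta^{-1}\|v_0\|_{L^2(B_{C\eta})}\leqslant C\|\nabla v_0\|_{L^2}$ by the Sobolev embedding --- but this must be proven (or quoted precisely from \cite{ref1}), since the analogous constant for the full perforated domain $\Omega_\e$ is \emph{not} uniform when $1<\alpha<3$ (cf.\ Lemma \ref{lem-Bog}). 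With these two repairs your proof is correct and matches the expected argument behind the citation.
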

Define the scaled cell solutions
\[w_{\eta ,\varepsilon }^i( \cdot ): = w_\eta ^i(\frac{ \cdot }{\varepsilon }),\quad q_{\eta ,\varepsilon }^i( \cdot ): = q_\eta ^i(\frac{ \cdot }{\varepsilon })\]
which solves
\ba\label{cell-general}
\begin{cases}
-\varepsilon ^2\Delta w_{\eta,\varepsilon}^i + \varepsilon\nabla q_{\eta,\varepsilon }^i = c_\eta ^2{e^i},&{\text{in}}\;\varepsilon {Q_0}\backslash ({a_\varepsilon}T_{0}),\\
\dive w_{\eta,\varepsilon}^i= 0, & \mbox{in} \;{\varepsilon Q_\eta},\\
w_{\eta, \varepsilon}^i= 0, & \mbox{on} \;a_\varepsilon T_{0},\\
(w_{\eta, \varepsilon}^i,q_{\eta ,\varepsilon }^i) \   \mbox{is $\e Q_0$-periodic.}
\end{cases}
\ea
Employing the estimates of $(w_\eta ^i,q_\eta ^i)$ in \eqref{lem-cell-1} gives
\begin{lemma}
The scaled cell soluiton $(w_{\eta, \varepsilon}^i, q_{\eta, \varepsilon}^i)$ has the estimates:
\ba\label{lem-cell-2}
&\|  w_{\eta, \varepsilon}^i\| _{L^2(\Omega )} \leqslant C\| {w_\eta ^i}\| _{L^2(Q_0)} \leqslant C,\\
&\|  q_{\eta, \varepsilon}^i\| _{L^2(\Omega )} \leqslant C\| q_{\eta, \varepsilon}^i\| _{L^2(Q_0}
\leqslant C {c_\eta},\\
&\|  \nabla w_{\eta, \varepsilon}^i\| _{L^2(\Omega)}
\leqslant C \varepsilon^{-1}\|   \nabla w_\eta ^i\| _{L^2(Q_0)}
\leqslant C \varepsilon^{-1}{c_\eta} \leqslant C\sigma_\varepsilon^{-1}.
\ea
\end{lemma}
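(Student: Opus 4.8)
The plan is to obtain every inequality in \eqref{lem-cell-2} by a straightforward scaling argument that transfers the cell estimates \eqref{lem-cell-1} on the reference cell $Q_0$ to the perforated bounded domain $\Omega$, using only the change of variables $y=x/\varepsilon$ together with the $\varepsilon Q_0$-periodicity of the scaled solutions. First I would record that, since $w_{\eta,\varepsilon}^i$ and $q_{\eta,\varepsilon}^i$ take identical values on every translate $\varepsilon Q_k$, it suffices to control the contribution of a single period cell and then multiply by the number of cells meeting $\Omega$. Because $\Omega$ is bounded and the cells $\{\varepsilon Q_k\}$ are pairwise disjoint of volume $\varepsilon^3$, the set of indices $k$ with $\varepsilon Q_k\cap\Omega\neq\emptyset$ has cardinality at most $C\varepsilon^{-3}$, with $C$ depending only on $\Omega$ (and the dimension) but not on $\varepsilon$; this counting is the only point requiring any care, and it is genuinely elementary.

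For the $L^2$ bounds I would integrate cell by cell. On each $\varepsilon Q_k$ the substitution $y=x/\varepsilon$ gives $\int_{\varepsilon Q_k}|w_{\eta,\varepsilon}^i(x)|^2\,\dd x=\varepsilon^{3}\int_{Q_0}|w_\eta^i(y)|^2\,\dd y$, and summing over the $O(\varepsilon^{-3})$ relevant cells the two powers of $\varepsilon$ cancel, yielding $\|w_{\eta,\varepsilon}^i\|_{L^2(\Omega)}\leqslant C\|w_\eta^i\|_{L^2(Q_0)}$. Since $w_\eta^i$ vanishes on the hole $\eta T_0$, its $L^2(Q_0)$ and $L^2(Q_\eta)$ norms coincide, so the first line of \eqref{lem-cell-1} closes the first estimate with an $\varepsilon$-independent constant. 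The pressure bound follows identically, now invoking $\|q_\eta^i\|_{L^2(Q_\eta)}\leqslant C c_\eta$ from \eqref{lem-cell-1} to reach $\|q_{\eta,\varepsilon}^i\|_{L^2(\Omega)}\leqslant C c_\eta$.

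The gradient estimate is the same computation with one extra ingredient: the chain rule gives $\nabla w_{\eta,\varepsilon}^i(x)=\varepsilon^{-1}(\nabla w_\eta^i)(x/\varepsilon)$, so the cell-by-cell substitution produces an additional factor $\varepsilon^{-2}$ and hence $\|\nabla w_{\eta,\varepsilon}^i\|_{L^2(\Omega)}\leqslant C\varepsilon^{-1}\|\nabla w_\eta^i\|_{L^2(Q_0)}$. Applying \eqref{lem-cell-1} bounds the right-hand side by $C\varepsilon^{-1}c_\eta$, and recalling $c_\eta=\varepsilon/\sigma_\varepsilon$ turns this into $C\sigma_\varepsilon^{-1}$, which is precisely the last line of \eqref{lem-cell-2}. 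I do not expect a genuine obstacle here: the entire lemma amounts to bookkeeping of the competing powers of $\varepsilon$ (the Jacobian $\varepsilon^{3}$ against the cell count $\varepsilon^{-3}$, plus the derivative factor $\varepsilon^{-1}$), together with the observation that the vanishing of $w_\eta^i$ on the hole permits freely replacing $Q_\eta$ by $Q_0$ in the reference-cell norms.
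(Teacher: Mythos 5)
Your proposal is correct and is essentially the paper's own argument: the paper proves this lemma simply by invoking the reference-cell estimates \eqref{lem-cell-1} together with the scaling $w_{\eta,\varepsilon}^i(\cdot)=w_\eta^i(\cdot/\varepsilon)$, and your cell-counting ($O(\varepsilon^{-3})$ cells) versus Jacobian ($\varepsilon^{3}$) bookkeeping, plus the extra $\varepsilon^{-1}$ from the chain rule and the identity $c_\eta=\varepsilon/\sigma_\varepsilon$, is exactly the omitted computation. The only detail worth stating explicitly is that $q_\eta^i$ (unlike $w_\eta^i$, which vanishes on $\eta T_0$) must be understood as extended by zero into the hole so that its $L^2(Q_0)$ and $L^2(Q_\eta)$ norms agree, after which your argument goes through verbatim.
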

From these estimates we have
\begin{equation}\label{cell-3}
w_{\eta,\varepsilon}^i \to \overline w^i\ \mbox{weakly} \ \mbox{in} \ L^2 (\Omega),\quad
c_\eta^{-1}q_{\eta, \varepsilon}^i \to \overline q^i\ \mbox{weakly} \ \mbox{in}\ L^2(\Omega).
\end{equation}

%
%
%


\section{Proof of Theorem \ref{thm-1}}\label{sec-thm1}

This section is devoted to proving Theorem \ref{thm-1} concerning the case of small holes where $a_{\e} = \e^{\alpha}$ with $\alpha >3$ and $\sigma_{\e} = \e^{\frac{3-\alpha}{2}} \to \infty$ \eqref{ae-se}.

\subsection{Estimates of velocity}
In this case, the uniform estimates of $\uu_{\e}$ follow directly  from the energy inequality \eqref{6}. Indeed, using H\"older's inequality and Poincar\'e inequality gives
\ba\nn
&\frac{1}{2}\int_{\Omega_\varepsilon} \left| {{{\uu}_\varepsilon}(x,t)} \right|^2 \dd x
+ \int_0^t \int_{\Omega _\varepsilon} {\left| {\nabla {{\uu}_\varepsilon }(x,s)} \right|}^2 \dd x\dd s\\
&\leqslant \int_0^t \int_{{\Omega _\varepsilon }} \ff_{\e} \cdot \uu_\varepsilon  \dd x\dd s
+ \frac{1}{2}\int_{{\Omega _\varepsilon }} \left| {{\uu}^0_{\e}(x)} \right|^2 \dd x\\
& \leqslant C \sup_{0<\e\leqslant 1}\| \ff_{\e}\|  _{L^{2}(0,T; {L^2}({\Omega _\varepsilon }))}^{2} + \frac{1}{2}\| {{{\nabla \uu}_\varepsilon }}\| _{L^{2}(0,t; {L^2}({\Omega _\varepsilon }))}^{2} + \frac{1}{2}\sup_{0<\e\leqslant1} \|\uu_{\e}^{0}\|_{L^{2}(\Omega_{\e})}^{2}.
\ea
Together with the assumption on the initial datum and the external force in \eqref{ini-force}, we deduce
\begin{equation}\label{est-u-0}
\|  \uu_\varepsilon \| _{{L^\infty}(0,T;{L^2}({\Omega _\varepsilon}))} \leqslant C, \quad\| {\nabla {{\uu}_\varepsilon }}\| _{{L^2}(0,T;{L^2}({\Omega _\varepsilon }))} \leqslant C.
\end{equation}
Since ${\uu}_\varepsilon \in L^{2}(0,T; W^{1,2}_{0}(\Omega_{\e}))$ has zero trace on the boundary, its zero extension
$\widetilde  \uu_{\e} \in L^{\infty}(0,T;  \\ L^{2}(\Omega)) \cap L^{2}(0,T; W^{1,2}_{0}(\Omega))$ has the estimates:
\begin{equation}\label{est-u}
\|  \widetilde \uu_\varepsilon \| _{L^\infty(0,T;L^2(\Omega))} \leqslant C,
\quad\| \widetilde \uu_\varepsilon \| _{L^2 (0,T;W_0^{1,2}(\Omega))} \leqslant C.
\end{equation}
Thus, up to a subsequence, there holds the convergence
\ba\label{thm1-conv-0}
{\widetilde {\uu}_\varepsilon } \to {\uu} \ \mbox{weakly(*) in} \  L^{\infty}(0,T;L^{2}(\Omega)) \cap L^2(0,T;W_0^{1,2}(\Omega)),
\ea
which is exactly \eqref{thm1-conv} in Theorem \ref{thm-1}.  Moreover, by the definition of $\UU_\e$ in \eqref{1-new-def} we have
\ba\label{est-small-U}
\| \widetilde \UU_\varepsilon  \|_{W^{1,\infty}((0,T); L^2(\Omega))}\leqslant C,
\quad \|\widetilde \UU_\varepsilon  \|_{W^{1,2}((0,T); W^{1,2}_{0}(\Omega))}\leqslant C, \quad \|  \widetilde \UU_\varepsilon  \|_{C([0,T]; W^{1,2}_{0}(\Omega))}\leqslant C.
\ea

\subsection{Extension of pressure}\label{sec:extP-small}
The extension of the pressure is given by using the dual formula and employing the so-called {\em restriction operator} due to Allaire \cite{ref3, ref4} for general sizes of holes, and due to Tartar \cite{ref7} for the case where the size of the holes is proportional to their mutual distance. A restriction operator $R_{\e} $ is a linear operator $R_{\e} : W^{1,2}_{0}(\Omega;\RR^{d}) \to W^{1,2}_{0}(\Omega_{\e};\RR^{d}) $ such that:
\ba\label{pt-res}
&\uu \in W_0^{1,2}(\Omega_\e;\RR^d) \Longrightarrow R_\e (\widetilde  \uu)=\uu \ \mbox{in}\ \Omega_\e,\ \mbox{where} \ \widetilde
\uu:=\begin{cases}\uu \ &\mbox{in}\ \Omega_\e,\\ 0  \ &\mbox{on}\ \Omega\setminus \Omega_\e, \end{cases}\\
&\uu \in W_0^{1,2}(\Omega;\RR^d),\  \dive\uu =0 \ \mbox{in} \ \Omega \Longrightarrow \dive R_\e (\uu) =0 \ \mbox{in} \ \Omega_\e,\\
&\uu \in W_0^{1,2}(\Omega;\RR^d)\Longrightarrow \|\nabla R_\e(\uu)\|_{L^2(\Omega_\e)} \leqslant C \, \big( \|\nabla \uu\|_{L^2(\Omega)} + (1+\sigma_{\e}^{-1}) \|\uu\|_{L^2(\Omega)}\big).
\ea
For each $\varphi\in L^{q}(0,T; W^{1,2}_0(\Omega))$ with $1<q<\infty$, the restriction $R_{\e}(\varphi)$ is taken only on spatial variable:
\[R_\e(\varphi)(\cdot,t)=R_\e(\varphi(\cdot,t))(\cdot) \quad \mbox{for each $t\in (0,T)$}. \]
Clearly $R_{\e}$ maps  $L^{q}(0,T; W^{1,2}_0(\Omega))$ onto $L^{q}(0,T; W^{1,2}_0(\Omega_{\e}))$ with the estimate:
\ba\label{pt-res-2}
\|\nabla R_\e(\varphi)\|_{L^{q}(0,T; L^{2}(\Omega_\e))} \leqslant C \, \big( \|\nabla \varphi\|_{L^{q}(0,T; L^{2}(\Omega))} + (1+\sigma_{\e}^{-1}) \|\varphi\|_{L^{q}(0,T; L^{2}(\Omega))}\big).
\ea
\begin{lemma}\label{lemma-large}
Let $1<q<\infty$. Assume $H \in L^{q}(0,T; W^{-1,2}(\Omega;\RR^{3}))$ satisfying
\ba
\l H, \varphi \r_{\Omega\times(0,T)}=0,\ \forall  \varphi\in C_{c}^{\infty}(\Omega \times (0,T)  ;\RR^{3}), \ \dive \varphi=0.
\nn\ea
 Then there exists a scalar function $P\in L^{q}(0,T; L_0^2(\Omega))$ such that:
\ba\label{H-nablaP}
H = \nabla P, \ \mbox{with} \  \| P\| _{L^{q}(0,T;L_0^2(\Omega))} \leqslant C\| H\| _{L^{q}(0,T;W^{-1,2}(\Omega))}.
\ea
\end{lemma}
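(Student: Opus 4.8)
The plan is to reduce this space-time statement to the classical stationary result of De Rham--Ne\v{c}as type applied at a.e.\ time slice, and to reconstruct the pressure explicitly through the adjoint of the Bogovskii operator, so as to obtain measurability in $t$ and the norm estimate simultaneously. Recall that on the bounded $C^{2,\beta}$ domain $\Omega$ there is a bounded linear Bogovskii operator $\calB\colon L_0^2(\Omega)\to W_0^{1,2}(\Omega;\RR^3)$ with $\dive\calB(g)=g$ and $\|\calB(g)\|_{W_0^{1,2}(\Omega)}\le C\|g\|_{L^2(\Omega)}$.

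First I would pass from the space-time orthogonality to an orthogonality holding for a.e.\ $t$. Testing the hypothesis with $\varphi(x,t)=\theta(t)\psi(x)$, where $\theta\in C_c^\infty(0,T)$ and $\psi\in C_c^\infty(\Omega;\RR^3)$ is divergence free (so that $\varphi$ is space-time divergence free), gives
\[
\int_0^T \theta(t)\,\l H(\cdot,t),\psi\r\,\dd t=0\quad\text{for all }\theta,
\]
hence $\l H(\cdot,t),\psi\r=0$ for a.e.\ $t$, with an exceptional null set a priori depending on $\psi$. Choosing a countable family of such $\psi$ dense in $V^{1,2}(\Omega)$ and taking the union of the corresponding null sets, I obtain a single null set $N$ so that for every $t\notin N$ the functional $H(\cdot,t)\in W^{-1,2}(\Omega;\RR^3)$ annihilates every divergence-free $W_0^{1,2}$ field.

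Next I would construct the pressure. For $t\notin N$ define $P(\cdot,t):=-\calB^\ast H(\cdot,t)\in L_0^2(\Omega)$, where $\calB^\ast\colon W^{-1,2}(\Omega)\to L_0^2(\Omega)$ is the adjoint of $\calB$; equivalently $\int_\Omega P(\cdot,t)\,g=-\l H(\cdot,t),\calB(g)\r$ for all $g\in L_0^2(\Omega)$. Boundedness of $\calB$ yields the slicewise bound $\|P(\cdot,t)\|_{L^2}\le C\|H(\cdot,t)\|_{W^{-1,2}}$. To identify $H(\cdot,t)=\nabla P(\cdot,t)$ in $W^{-1,2}$, decompose an arbitrary $\varphi\in W_0^{1,2}(\Omega;\RR^3)$ as $\varphi=(\varphi-\calB(\dive\varphi))+\calB(\dive\varphi)$ (note $\dive\varphi\in L_0^2(\Omega)$ since $\varphi$ has zero trace); the first summand is divergence free, hence annihilated by $H(\cdot,t)$ for $t\notin N$, whence
\[
\l H(\cdot,t),\varphi\r=\l H(\cdot,t),\calB(\dive\varphi)\r=-\int_\Omega P(\cdot,t)\,\dive\varphi=\l\nabla P(\cdot,t),\varphi\r .
\]
Since $\calB^\ast$ is a fixed bounded linear operator and $H\in L^q(0,T;W^{-1,2})$ is strongly measurable, $t\mapsto P(\cdot,t)$ is strongly measurable into $L_0^2(\Omega)$; integrating the slicewise bound in the $L^q$-norm then yields $\|P\|_{L^q(0,T;L_0^2(\Omega))}\le C\|H\|_{L^q(0,T;W^{-1,2}(\Omega))}$ together with the identity $H=\nabla P$ in $L^q(0,T;W^{-1,2}(\Omega))$.

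The only genuinely delicate points are the two measurability steps: collapsing the $\psi$-dependent null sets into a single one, which is handled by the separability of $V^{1,2}(\Omega)$, and verifying the strong measurability of the Bochner map $t\mapsto P(\cdot,t)$, which follows from composing the measurable $H$ with the fixed operator $\calB^\ast$. Everything else is a direct consequence of the Bogovskii construction, so I expect no serious analytical obstacle beyond correctly assembling these measurability arguments.
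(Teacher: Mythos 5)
Your proposal is correct, and at the top level it follows the same strategy as the paper: test with separated functions $\theta(t)\psi(x)$ to reduce the space-time orthogonality to an orthogonality at a.e.\ time slice, then solve the stationary problem slicewise and integrate in $t$. The execution of the stationary step differs, though, and in a way that matters. The paper simply invokes the classical De Rham--Ne\v{c}as type result from Galdi's book at each fixed $t$ and then asserts that the $L^q$-in-time statement follows ``immediately''; this leaves two measurability points implicit: the exceptional null set in $t$ a priori depends on the test field $\psi$, and it is not explained why the slicewise-defined $t\mapsto P(\cdot,t)$ is strongly measurable as an $L_0^2(\Omega)$-valued map, which is exactly what is needed to conclude $P\in L^q(0,T;L_0^2(\Omega))$. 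You close both: the $\psi$-dependent null sets are collapsed using a countable family of smooth divergence-free fields dense in $V^{1,2}(\Omega)$ (valid here since $\Omega$ is $C^{2,\beta}$, hence Lipschitz), and the pressure is produced not by citation but explicitly as $P(\cdot,t)=-\calB^\ast H(\cdot,t)$, the image of $H(\cdot,t)$ under a single fixed bounded operator (the adjoint of the Bogovskii operator on $\Omega$), so that strong measurability and the uniform slicewise bound come for free; the identification $H(\cdot,t)=\nabla P(\cdot,t)$ via the decomposition $\varphi=(\varphi-\calB(\dive\varphi))+\calB(\dive\varphi)$ is the standard constructive proof of the stationary lemma the paper cites. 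In short: the paper's proof is shorter by outsourcing the stationary step, while yours is self-contained and actually supplies the measurability arguments that make the Bochner-space conclusion rigorous.
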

\begin{proof}
For each $\psi \in C_c^\infty(\Omega;\RR^{3})$ with $\dive \psi\equiv 0$ and $ \phi\in C_c^\infty(0,T)$, there holds
\ba\nonumber
0 = \l H, \phi(t)\psi(x) \r_{\Omega\times(0,T)} =  \int_0^T \l H(\cdot,t), \psi(\cdot) \r_{\Omega} \phi(t)\dd t.
\ea
Thus we have for a.a. $t\in (0,T)$ that
\ba\nonumber
\l H(\cdot,t), \psi(\cdot) \r_{\Omega}=0, \quad \forall \psi \in C_c^\infty(\Omega;\RR^{3}), \ \dive \psi\equiv 0.
\ea
Therefore,  for a.a. $t\in(0,T)$, there exists $P(\cdot,t)\in L_0^2(\Omega)$ (see \cite{ref8}), such that
\ba\nonumber
H (\cdot,t)=\nabla P(\cdot, t),\ \mbox{with}\  \| P(\cdot, t)\| _{L_0^2(\Omega)} \leqslant\| H (\cdot,t)\| _{W^{-1,2}(\Omega)}.
\ea
This implies immediately \eqref{H-nablaP}.

\end{proof}

Now we define a functional $\widetilde H_\e$ in $\mathcal{D}'(\Omega\times(0,T))$ by the following dual formulation:
\ba\nn
\l\widetilde H_\e,\varphi\r_{\Omega\times(0,T)}=\l\nabla P_\e,R(\varphi)\r_{\Omega_\e\times(0,T)},\ \forall \varphi \in C_c^\infty(\Omega\times(0,T);\RR^{3}),
\ea
where $P_{\e}\in C([0,T],L_{0}^{2}(\Omega_{\e}))$ is given in \eqref{2-equa}. Then for any $\varphi  \in C_c^\infty  (\Omega\times(0,T); \mathbb{R}^3)$,
\ba\label{def-F-P}
\l \widetilde H_\e,\varphi \r_{\Omega\times(0,T)}&
=\l\nabla {P_\varepsilon}(t), R_\varepsilon (\varphi) \r_{\Omega_\e\times(0,T)}\\
&=\l\FF_\e(t)-\uu_\varepsilon(t)+\uu^0_\e + \mu \Delta \UU_\varepsilon(t)
-{\Phi_\varepsilon}(t),{R_\varepsilon}(\varphi) \r_{\Omega_\e\times(0,T)}.
\ea

Using the uniform estimates in \eqref{est-u-0}--\eqref{est-small-U} implies
\ba\label{est-P-small-1}
\left|  \l \Delta \UU_\varepsilon(t), R_\varepsilon (\varphi) \r_{\Omega_\varepsilon\times(0,T)} \right|
&\leqslant\| \nabla \UU_\varepsilon(t)\| _{L^2 L^2}
\|  \nabla R_\varepsilon (\varphi)\| _{L^2 L^2} \leqslant C\|   \varphi \| _{L^2 W^{1,2}_0},\\
\left|  \l\Phi_\varepsilon(t), R_\varepsilon(\varphi)\r_{\Omega_\varepsilon\times(0,T)} \right|
&= \left|  \l \Psi_\varepsilon(t), \nabla R_\varepsilon(\varphi)\r_{\Omega_\varepsilon\times(0,T)} \right| \leqslant\|   \Psi_{\e}\| _{L^\infty L^2}\|  \nabla R_{\e} (\varphi )\| _{L^2 L^{2}}\\
&\leqslant C\|\uu_{\e}\otimes \uu_{\e}\|_{L^{1}L^{2}}\| \varphi\| _{L^2 W^{1,2}_0}\leqslant  C\|  \varphi\| _{L^2 W^{1,2}_0},\\
\left| \l {\uu}_\e, R_\varepsilon (\varphi) \r_{\Omega_\varepsilon\times(0,T)} \right|
&\leqslant C\| \uu_\e\| _{L^\infty(0,T; L^2(\Omega _\varepsilon))} \|    R_\varepsilon (\varphi) \| _{L^2 L^2}\leqslant C\| \varphi \| _{L^2 W^{1,2}_0},\\
\left| \l {\uu}^0_\e, R_\varepsilon (\varphi) \r_{\Omega _\varepsilon\times(0,T)} \right|
&\leqslant C \| \uu^0_\e\| _{L^2} \| R_\varepsilon (\varphi)\| _{L^2 L^2} \leqslant C\|  \varphi\| _{L^2 W^{1,2}_0},\\
\left| \l \FF_\e, R_\varepsilon (\varphi) \r_{\Omega _\varepsilon\times(0,T)} \right|
&\leqslant C \| \FF_\e\| _{L^2 L^2}\| R_\varepsilon (\varphi) \| _{L^2 L^2}\leqslant C\|   \varphi\| _{L^2 W^{1,2}_0}.
\ea
Estimates in \eqref{est-P-small-1} imply
\ba\label{est-small-F}
\l \widetilde H_\e,\varphi \r_{\Omega\times(0,T)}
&\leqslant C \| \varphi \| _{L^2 W^{1,2}_0}.
\ea
Thus $\widetilde H_\e$ is bounded in $L^{2}(0,T; W^{-1,2}(\Omega;\RR^{3}))$. Moreover, the second property of the restriction operator in \eqref{pt-res} implies that
\ba
\l \widetilde H_\e, \varphi \r_{\Omega\times(0,T)}=0,\ \forall  \varphi\in C_{c}^{\infty}(\Omega \times (0,T)  ;\RR^{3}), \ \dive \varphi=0.
\nn\ea
Thus, by \eqref{est-small-F}, we can apply Lemma \ref{lemma-large} and deduce that there exists $\widetilde P_\e \in L^2(0,T; L^2_0(\Omega)) $ such that
\ba\label{small-F-P}
\widetilde H_\e=\nabla \widetilde P_\e,
\ea
and
\ba\label{est-small-P}
\|  \widetilde P_\e \|_{L^2(0,T; L^2_0(\Omega))} \leqslant C \| \widetilde H_\e \|_{L^2(0,T; W^{-1,2}(\Omega))} \leqslant C.
\ea
\subsection{Momentum equations in the homogeneous domain}

In the case of small holes, we find that the extension $\widetilde \uu_\varepsilon$ satisfies the Navier-Stokes equations in $\Omega$ up to a small remainder:
\begin{proposition}\label{moment-equa}
Under the assumptions in Theorem \ref{thm-1}, the extension $\widetilde \uu_\varepsilon$ satisfies the following equations in the sense of distribution:
\[\partial _t \widetilde \uu_\varepsilon + \dive \left(\widetilde \uu_\varepsilon  \otimes \widetilde \uu_\varepsilon \right)- \mu \Delta \widetilde \uu_\varepsilon + \nabla \hat  p_\varepsilon =  \widetilde \ff_\e +\mathbf{G}_\varepsilon, \ \dive \widetilde \uu_{\e} = 0, \]
where $\GG_\varepsilon \in \mathcal{D}' (\Omega \times (0,T))$ satisfying
\begin{equation}\label{est-F-3D}
\left| \l \GG_\varepsilon, \varphi \r \right| \leqslant C\varepsilon ^\sigma \big(\|  \partial _t \varphi \| _{L^{\frac 43} L^{2}}
+\| \nabla \varphi\| _{L^{4} L^{r_1}} \big), \ \forall \, \varphi\in C_{c}^{\infty}(\Omega\times (0,T);\mathbb{R}^3), \ \dive \varphi =0.
\end{equation}
Here $\sigma:=((3 - q)\alpha  - 3)/q >0$ for some $q>2$ close to $2$, and $ 2<r_1<3$ given in \eqref{r1}.
\end{proposition}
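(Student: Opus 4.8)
The plan is to obtain $\GG_\e$ by a duality argument in which, inside the weak formulation \eqref{5}, the smooth divergence-free test function $\varphi$ on the whole domain is replaced by its Allaire restriction $R_\e(\varphi)$, which is admissible in $\Omega_\e$. Fix $\varphi\in C_c^\infty(\OO\times(0,T);\RR^3)$ with $\dive\varphi=0$. Since $\uu_\e\in W^{1,2}_0(\Omega_\e)$ is divergence free, its zero extension satisfies $\dive\uue=0$ in $\OO$, every space-time integral over $\OO$ carrying a factor $\uue$ reduces to the same integral over $\Omega_\e$, and because $\dive\varphi=0$ the pressure $\nabla\hat p_\e$ is annihilated. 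I therefore define
\begin{align*}
\l\GG_\e,\varphi\r := \int_0^T\!\!\int_{\Omega_\e}\big(-\uu_\e\cdot\p_t\varphi-\uu_\e\otimes\uu_\e:\nabla\varphi+\mu\nabla\uu_\e:\nabla\varphi-\ff_\e\cdot\varphi\big)\,\dd x\,\dd t.
\end{align*}
Now $R_\e(\varphi)(\cdot,t)\in V^{1,2}(\Omega_\e)$ vanishes on the holes and, since $R_\e$ acts only in space, $R_\e(\varphi)$ still vanishes near $t=0$ and $t=T$ with $\p_t R_\e(\varphi)=R_\e(\p_t\varphi)$; after a routine smooth approximation it is an admissible test function in \eqref{5}, whose initial term drops out. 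Subtracting that identity from the display above expresses $\GG_\e$ purely through the restriction defect $\varphi-R_\e(\varphi)$:
\begin{align*}
\l\GG_\e,\varphi\r = \int_0^T\!\!\int_{\Omega_\e}\big(&-\uu_\e\cdot\p_t(\varphi-R_\e\varphi)-\uu_\e\otimes\uu_\e:\nabla(\varphi-R_\e\varphi)\\
&+\mu\nabla\uu_\e:\nabla(\varphi-R_\e\varphi)-\ff_\e\cdot(\varphi-R_\e\varphi)\big)\,\dd x\,\dd t.
\end{align*}

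The analytic core, and the main obstacle, is a pair of quantitative estimates on this defect that go beyond the a priori bounds \eqref{pt-res} and require the local structure of $R_\e$: the correction lives only in the thin perforated layer $\omega_\e:=\bigcup_{k\in K_\e}\big(B(\e x_k,\delta_2 a_\e)\setminus T_{\e,k}\big)$, of total volume $|\omega_\e|\sim\e^{-3}a_\e^3=\e^{3(\alpha-1)}$, and near each hole it brings $\varphi$ down to $0$ across an annulus of width $\sim a_\e$. Averaging $\varphi$ cell by cell gives the \emph{value} estimate $\|\varphi-R_\e\varphi\|_{L^2(\Omega_\e)}\le C\e^{3(\alpha-1)/2}\|\varphi\|_{L^2(\OO)}$, whose power is strictly larger than $\sigma$. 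For the \emph{gradient} the dominant contribution is the cutoff term of size $\sim|\varphi|/a_\e$ on $\omega_\e$, so that, for each fixed $t$, I would prove
\begin{align*}
\|\nabla(\varphi-R_\e\varphi)\|_{L^2(\Omega_\e)}\le C\,a_\e^{-1}|\omega_\e|^{\frac12-\frac1s}\|\varphi\|_{L^s(\OO)}\le C\,\e^{\sigma}\|\nabla\varphi\|_{L^{r_1}(\OO)},
\end{align*}
where H\"older on $\omega_\e$ with $\tfrac1s=\tfrac12-\tfrac1q$ furnishes the volume factor and the Sobolev embedding $W^{1,r_1}_0(\OO)\hookrightarrow L^s(\OO)$ with $\tfrac1s=\tfrac1{r_1}-\tfrac13$ converts $\|\varphi\|_{L^s}$ into $\|\nabla\varphi\|_{L^{r_1}}$ (legitimate since $\varphi$ is compactly supported). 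Matching the two expressions for $1/s$ forces the relation $\tfrac1{r_1}=\tfrac56-\tfrac1q$ of \eqref{r1}, giving $2<r_1<3$ for $2<q<3$, while the surviving power is $a_\e^{-1}|\omega_\e|^{1/q}=\e^{3(\alpha-1)/q-\alpha}=\e^{\sigma}$; positivity $\sigma>0$ is exactly $\alpha>3/(3-q)$, met for $\alpha>3$ and $q$ close to $2$. The genuine $\nabla\varphi$ part of the correction produces a strictly subdominant power, and the small Bogovskii correction enforcing $\dive R_\e\varphi=0$ is controlled in the same way.

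With the two defect estimates in hand it remains to bound the four terms by H\"older in time and space together with the uniform velocity bounds \eqref{est-u}. The time term uses the value estimate: $|\int\uu_\e\cdot\p_t(\varphi-R_\e\varphi)|\le\|\uu_\e\|_{L^\infty L^2}\|\p_t\varphi-R_\e\p_t\varphi\|_{L^{4/3}L^2}\le C\e^{3(\alpha-1)/2}\|\p_t\varphi\|_{L^{4/3}L^2}\le C\e^{\sigma}\|\p_t\varphi\|_{L^{4/3}L^2}$, and the force term is handled identically after using $\|\varphi\|_{L^2}\le C\|\nabla\varphi\|_{L^{r_1}}$ and $L^4_t\hookrightarrow L^2_t$. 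The viscosity term uses the gradient estimate directly, $\mu|\int\nabla\uu_\e:\nabla(\varphi-R_\e\varphi)|\le C\e^{\sigma}\|\nabla\uu_\e\|_{L^2 L^2}\|\nabla\varphi\|_{L^2 L^{r_1}}\le C\e^{\sigma}\|\nabla\varphi\|_{L^4 L^{r_1}}$. For convection I would interpolate \eqref{est-u} to get $\uu_\e\in L^{8/3}(0,T;L^4(\OO))$, hence $\uu_\e\otimes\uu_\e\in L^{4/3}(0,T;L^2(\OO))$, and pair it with the $L^2_x$ gradient defect, $|\int\uu_\e\otimes\uu_\e:\nabla(\varphi-R_\e\varphi)|\le C\e^{\sigma}\|\uu_\e\|_{L^{8/3}L^4}^2\|\nabla\varphi\|_{L^4 L^{r_1}}$, the time exponents $4/3$ and $4$ being conjugate. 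Collecting the four bounds yields \eqref{est-F-3D}. The decisive and most delicate point is the gradient defect estimate, where the cutoff power $a_\e^{-1}$ must be \emph{exactly} compensated by the perforation volume through the coupled choice of $q$ and $r_1$; using a naive $L^{r_1}_x$ defect bound would instead produce the divergent factor $a_\e^{-1}|\omega_\e|^{1/3}=\e^{-1}$, so routing convection through the $L^2_x$ gradient estimate and the interpolated bound $\uu_\e\in L^{8/3}L^4$ is essential.
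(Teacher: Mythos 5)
Your plan replaces the paper's cutoff-plus-Bogovskii decomposition with a duality argument through the Allaire restriction operator, and much of your exponent bookkeeping is genuinely right: the relation $\tfrac1{r_1}=\tfrac56-\tfrac1q$, the identity $a_\e^{-1}|\omega_\e|^{1/q}=\e^{3(\alpha-1)/q-\alpha}=\e^\sigma$, and the subdominance of the genuine $\nabla\varphi$ part all match the paper's arithmetic. However, there is a genuine gap: the ``value estimate'' $\|\varphi-R_\e\varphi\|_{L^2(\Omega_\e)}\le C\e^{3(\alpha-1)/2}\|\varphi\|_{L^2(\Omega)}$ is false. Take $\varphi$ smooth, divergence free, supported in a single ball $B(\e x_k,\delta_2 a_\e)$ and normalized in $L^2$; since $R_\e\varphi$ must vanish on $\partial T_{\e,k}$, the defect in the surrounding annulus is of the same order as $\varphi$ itself, so $\|\varphi-R_\e\varphi\|_{L^2}$ is of order one, not $O(\e^{3(\alpha-1)/2})$. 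No $L^2\to L^2$ smallness can hold for the restriction defect: the operator needs the trace of its argument on the hole boundaries, so any true bound must pay with either higher spatial integrability of the input (H\"older on the thin perforated layer) or with its gradient.

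This is fatal precisely for the time-derivative term, which is the one place where \eqref{est-F-3D} permits only $\|\p_t\varphi\|_{L^{4/3}L^2}$ --- no spatial derivative and no integrability above $L^2$. Any corrected defect estimate applied to $\p_t\varphi$ produces either $\|\nabla\p_t\varphi\|$ or $\|\p_t\varphi\|_{L^s_x}$ with $s>2$ on the right-hand side, neither of which is admissible, and you also cannot shift the burden to the velocity gradient, since $\nabla\uu_\e$ is only in $L^2_t$ while pairing against $L^{4/3}_t$ would require $L^4_t$. The paper's proof is engineered exactly around this obstruction: with the multiplicative cutoff $g_\e$, the time defect splits into $(1-g_\e)\p_t\varphi$, handled by H\"older with $\uue\in L^4L^3$ absorbing the extra integrability and $\|1-g_\e\|_{L^6}$ supplying the smallness, plus $\p_t\varphi_2=\calB_\e\big(\dive((\p_t\varphi)(g_\e-1))\big)$, handled by the $L^r$-extension of the Bogovskii operator, estimate \eqref{pro-div2}, which bounds $\calB_\e(\dive\vg)$ in $L^r$ by $\|\vg\|_{L^r}$ with no derivatives whatsoever. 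Your proposal has no counterpart to \eqref{pro-div2}, so while the viscosity, convection and force terms could be repaired along the lines you sketch, the time term cannot be closed as claimed without either importing the paper's mechanism or formulating and proving a new, finer mapping property of $R_\e$ that does not exist in the cited literature.
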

\begin{proof}
Let $\varphi  \in C_c^\infty (\Omega \times (0,T);{\mathbb{R}^3})$ with $\dive\varphi = 0$. To extend the Navier-Stokes equations  from $\Omega$ to $\Omega_\varepsilon$, an idea is to find a family of functions ${\{ {g_\varepsilon}\} _{\varepsilon>0}}$ vanishing on the holes and converges to $1$ in some Sobolev space $W^{1,q}(\Omega)$ and decompose $\varphi$ as
\be\label{dec-g}
\varphi  = {g_\varepsilon }\varphi + (1-{g_\varepsilon})\varphi.
\ee
Then $g_\varepsilon\varphi$ can be treated as a test function for the momentum equations in $\Omega_\varepsilon$. While for the terms related the other part $(1-g_\varepsilon)\varphi$, we show that they are small and converge to zero. However, such a decomposition destroyed the divergence free property of $\varphi$: $\dive (g_{\e} \varphi) \neq 0$. To overcome this trouble, we introduce the following Bogovskii type operator in perforated domain $\Omega_{\e}$ (see Proposition 2.2 in \cite{ref9} and Theorem 2.3 in \cite{ref15}):
\begin{lemma}\label{lem-Bog}
Let $\Omega_\e$ defined as in \eqref{1-hole} and \eqref{1-domain} with $\alpha\geq 1$. Then for any $1<q<\infty$, there exists a linear operator $\mathcal{B}_\e \,: \, L^q_{0}(\Omega_\e)\to W_{0}^{1,q}(\Omega_\e;\RR^3)$ such that for any $f\in L^q_{0}(\Omega_\e)$, there holds
\be\label{pro-div1}
\dive \mathcal{B}_\e(f)=f\  \mbox{in} \ \Omega_\e, \quad \|\mathcal{B}_\e(f)\|_{W_{0}^{1,q}(\Omega_\e;\RR^3)}\leq C\big(1+\e^{\frac{(3-q)\aaa-3}{q}}\big)\|f\|_{L^q(\Omega_\e)}
\ee
for some constant $C$ independent of $\e$.

  For any $r >3/2$, the linear operator $\mathcal B_\e$ can be extended as a linear operator from $\{\dive \vg : \vg\in L^r(\Omega_\e;\RR^3),
  \vg \cdot {\bf n}=0 \mbox{ on } \p \Omega_\e\}$ to $L^r(\Omega_\e;\RR^3)$ satisfying
\be\label{pro-div2}
 \|\mathcal{B}_\e(\dive {\bf g})\|_{L^{r}(\Omega_\e;\RR^3)}\leq C \|{\bf g}\|_{L^r(\Omega_\e;\RR^3)},
\ee
for some constant $C$ independent of $\e$.
\end{lemma}

By the description of the holes in \eqref{1-hole},  there exists cut-off functions $\{ g_\varepsilon \}_{\varepsilon>0} \subset C^\infty(\RR^{3})$ such that $0\leqslant g_{\e} \leqslant 1$ and
\begin{equation}\nn
  g_\varepsilon= 0 \  \mbox{on}  \  \bigcup_{k \in K_\varepsilon} B(\varepsilon {x_k},\delta_1 \varepsilon^\alpha),\quad g_\varepsilon = 1  \  \mbox{on}  \  (\bigcup_{k \in K_\varepsilon} B(\varepsilon {x_k},\delta_2 \varepsilon^\alpha))^c,\quad |\nabla g_{\e}| \leqslant C \e^{-\alpha}.
\end{equation}
Then for each $1\leqslant q \leqslant \infty$ there holds
\begin{equation}\label{est-g}
\|  g_\varepsilon- 1\| _{L^q(\RR^{3})} \leqslant C \varepsilon^{\frac{3\alpha-3}{q}},\quad
\|  \nabla g_\varepsilon\| _{L^q(\RR^{3})} \leqslant C\varepsilon ^{\frac{3\alpha-3}{q}-\alpha}.
\end{equation}
Now we estimate
\ba
I^\varepsilon &:= \int_0^T \int_\Omega \widetilde \uu_\varepsilon \partial _t \varphi
+\widetilde \uu_\varepsilon  \otimes \widetilde \uu_\varepsilon :\nabla \varphi
-\nabla \widetilde \uu_\varepsilon :\nabla \varphi + \widetilde \ff_\e \varphi \, \dd x \dd t.
\nonumber\ea
Using the decomposition \eqref{dec-g} we write
\ba
I^\varepsilon &=\int_0^T \int_{\Omega_\varepsilon}\uu_\varepsilon \partial_t (g_\varepsilon \varphi)
+ \uu_\varepsilon \otimes  \uu_\varepsilon:\nabla (g_\varepsilon\varphi)
-\nabla  \uu_\varepsilon:\nabla (g_\varepsilon\varphi ) +  \ff_\e (g_\varepsilon \varphi )\dd x \dd t  +\sum_{j=1}^{4} {I_j}
\nonumber\ea
with
\ba\nn
I_1 & = \int_0^T \int_\Omega \widetilde \uu_\varepsilon(1-g_\varepsilon) \partial _t\varphi \dd x \dd t, \\
I_2 & = \int_0^T \int_\Omega \widetilde \uu_\varepsilon  \otimes \widetilde \uu_\varepsilon :(1-g_\varepsilon)\nabla \varphi
-\widetilde \uu_\varepsilon \otimes \widetilde \uu_\varepsilon :\nabla g_\varepsilon \otimes \varphi \dd x \dd t ,\\
I_3 &= \int_0^T \int_\Omega \nabla \widetilde \uu_\varepsilon :(1-g_\varepsilon)\nabla \varphi
- \nabla \widetilde \uu_\varepsilon:\nabla g_\varepsilon \otimes \varphi \dd x \dd t ,\\
I_4 &= \int_0^T \int_\Omega \widetilde \ff_\e (g_\varepsilon  - 1)\varphi \dd x\dd t.
\ea

Observe that
$$
\int_{\Omega_{\e}} \varphi  \cdot \nabla g_{\e}\dd x = \int_{\Omega_{\e}} \dive(\varphi g_{\e}) \dd x  = 0.
$$
Thus, we can apply Lemma \ref{lem-Bog} and introduce
\ba\nn
\varphi_{1} := \varphi - \varphi_{2},\quad  \varphi_{2} := \calB_{\e} (\dive(\varphi g_{\e})) = \calB_{\e} (\varphi  \cdot \nabla g_{\e}).
\ea
Then $\varphi_{1} \in C_{c}^{\infty}( (0,T); W^{1,2}_{0}(\Omega_{\e}))$ satisfying $\dive \varphi_{1} = 0$. Using the weak formulation \eqref{5} gives
\ba
I^\varepsilon & = \int_0^T \int_{\Omega_\varepsilon}\uu_\varepsilon \partial_t \varphi_{1}
+ \uu_\varepsilon \otimes  \uu_\varepsilon:\nabla \varphi_{1}
-\nabla  \uu_\varepsilon:\nabla \varphi_{1} +  \ff_\e \cdot  \varphi_{1} \dd x \dd t   + \sum_{ j=1}^{4} {I_j} +   \sum_{ j=5}^{8} {I_j} \\
&= \sum_{ j=1}^{4} {I_j} +   \sum_{ j=5}^{8} {I_j} ,
\nonumber\ea
where
\ba
I_5 &=\int_0^T \int_{\Omega_\varepsilon}\uu_\varepsilon \partial_t \varphi_{2}  \dd x \dd t,\quad &&I_6 = \int_0^T  \int_{\Omega_\varepsilon} \uu_\varepsilon \otimes  \uu_\varepsilon : \nabla \varphi_{2}   \dd x \dd t , \\
I_7 &= \int_0^T \int_{\Omega_\varepsilon} -\nabla  \uu_\varepsilon:\nabla \varphi_{2}  \dd x \dd t ,\quad &&I_8 = \int_0^T \int_{\Omega_{\e}}  \ff_\e \cdot  \varphi_{2}\dd x\dd t.
\nonumber \ea

Now we estimate $I_{j}$ term by term.  Since $\alpha>3$, there exists $q\in(2, 3)$ close to $2$ such that
\be\nn
\sigma:=\frac{(3 - q)\alpha  - 3}{q} >0.
\ee
Let $q^*$ be the Sobolev conjugate component to $q$ with $\frac{1}{q^{*}} = \frac{1}{q} - \frac{1}{3}$. Clearly $6<q^{*}<\infty$.

\medskip

By the uniform estimates \eqref{est-u} and \eqref{est-g}, together with interpolation and Sobolev embedding, we obtain
\[|I_1| \leqslant\| \widetilde \uu_\varepsilon \| _{L^4 L^3}\| 1- g_\varepsilon \| _{L^6}\| \partial_t\varphi\| _{L^{\frac 43} L^2} \leqslant   \| \widetilde \uu_\varepsilon \| _{L^\infty L^2 \cap L^{2} L^{6}} \| 1- g_\varepsilon \| _{L^6}\| \partial_t\varphi\| _{L^{\frac 43} L^2}   \leqslant C \varepsilon ^\sigma\| \partial _t\varphi \| _{L^{\frac 43} L^2} .\]

Again by interpolation and using Sobolev embedding, we have
\ba
|I_2| \leqslant \| \widetilde \uu_\varepsilon  \otimes \widetilde \uu_\varepsilon \| _{L^{\frac{4}{3}} L^2}\big(
\|  g_\varepsilon - 1\| _{L^{q^{*}}}\|  \nabla \varphi \| _{L^4L^{r_{1}}} + \| \nabla g_\varepsilon \| _{L^q}\| \varphi \| _{L^{4} L^{r_2}}\big) \leqslant C \varepsilon ^\sigma\| \nabla \varphi \| _{L^{4} L^{r_1}} ,
\nonumber\ea
where
\ba\label{r1}
\frac{1}{r_{1}} = \frac{1}{2}  - \frac{1}{q_{*}} = \frac{5}{6} - \frac{1}{q}>\frac{1}{3}, \quad \frac{1}{r_{2}} = \frac{1}{2} - \frac{1}{q}  = \frac{1}{r_{1}^{*}}.
\ea

Similarly,
\ba
|I_3| \leqslant\| \nabla \widetilde \uu_\varepsilon \| _{L^2 L^2} \big(\|  g_\varepsilon - 1\| _{L^{q^*}}
\|  \nabla \varphi\| _{L^2 L^{r_1}} +\| \nabla g_\varepsilon\| _{L^q}\|  \varphi\| _{L^2 L^{r_2}}\big)\leqslant C\varepsilon ^\sigma\| \nabla \varphi\| _{L^2 L^{r_1}}.
\nonumber\ea

For $I_{4}$,
\[|I_4| \leqslant\| \ff_\e\| _{L^2 L^2}\| g_\varepsilon - 1\| _{L^{q^*}}\|  \varphi \| _{L^2 L^{r_1}}
\leqslant C \varepsilon ^\sigma\| \varphi \| _{L^2 L^{r_1}}.\]

\medskip

Next we estimate $I_{j}, j=5,6,7,8$ for which the estimates of the Bogovskii operator $\calB_{\e}$ in \eqref{pro-div1} and \eqref{pro-div2} will be repeatedly used. Since the Bogovskii operator $\calB_{\e}$ only applies on spatial variable, then $$\p_{t} \varphi_{2} = \p_{t} \calB_{\e}(\varphi  \cdot \nabla  g_{\e}) = \calB_{\e}(\p_{t}\varphi \cdot \nabla  g_{\e}) =  \calB_{\e} \big(\dive((\p_{t} \varphi)   g_{\e})\big) = \calB_{\e} \big(\dive((\p_{t} \varphi)   (g_{\e}-1))\big).$$ Thus, taking $r_{5}>3/2$ and using \eqref{pro-div2} gives
\ba
|I_5|&  \leqslant\| \uu_\varepsilon \| _{L^4 L^3} \|\p_{t} \varphi_{2} \|_{L^{\frac{4}{3}} L^{r_{5}}}  \leqslant C \| \p_{t}\varphi ( g_\varepsilon -1 )\| _{L^{\frac 43} L^{r_{5}}}  \\
& \leqslant   C\| g_\varepsilon - 1\| _{L^{q^*}}\|   \p_{t} \varphi \| _{L^{\frac 43} L^{r_{6}}}  \leqslant C \varepsilon^\sigma\|  \p_{t} \varphi \| _{L^{\frac 43} L^{r_{6}}},
\nn\ea
where
$$
\frac{1}{r_{6}} = \frac{1}{r_{5}} - \frac{1}{q^{*}} =  \frac{1}{r_{5}} - \frac{1}{q} + \frac{1}{3}.
$$
Since $q>2$, we can choose $r_{5}>3/2$ close to $3/2$ such that
$$
  \frac{1}{r_{5}} - \frac{1}{q} = \left(\frac{1}{r_{5}} - \frac{2}{3} \right) + \left( \frac{1}{2} - \frac{1}{q} \right)+ \frac{1}{6} = \frac{1}{6}.
$$
With such a choice of $r_{5}$ we have $ r_{6} = 2$ and
\ba
|I_5|  \leqslant C \varepsilon^\sigma\|  \p_{t} \varphi \| _{L^{\frac 43} L^{2}}.
\nn\ea

Using \eqref{pro-div1}, by similar argument one has
\ba
|I_6| + |I_{7}| + |I_{8}| \leqslant C \varepsilon ^\sigma\| \nabla \varphi \| _{L^{4} L^{r_1}}.
\nonumber\ea

Summing up the above estimates for $I_{j}$ gives
$$
 |I^\varepsilon| \leqslant C\varepsilon ^\sigma \big(\|  \partial _t \varphi \| _{L^{\frac 43} L^{2}}
+\| \nabla \varphi\| _{L^{4} L^{r_1}} \big).
$$
This implies our desired result in \eqref{est-F-3D}.

\end{proof}

\subsection{Convergence of the nonlinear convective term}
Here we will show  $\widetilde \uu_{\e}$ has certain compactness such that the convergence of the nonlinear convective term $\widetilde \uu_{\e}\otimes \widetilde \uu_{\e}$ can be obtained.  A key observation is that some uniform estimates related to time derivative can be deduced from Proposition \ref{moment-equa}. Indeed, By Proposition \ref{moment-equa}, for any $\varphi  \in C_c^\infty (  \Omega \times (0,T) ;{\mathbb{R}^3})$ with $\dive\varphi=0$, we have
\ba\nn
\left| \l \partial _t \widetilde \uu_\varepsilon, \varphi \r \right| &
\leqslant \int_0^T \int_\Omega  \left| \widetilde \uu_\varepsilon
\otimes \widetilde \uu_\varepsilon :\nabla \varphi \right|  + \left| \nabla \widetilde \uu_\varepsilon:\nabla \varphi   \right| + \left|  \ff_\e \cdot \varphi \right| \dd x\dd t  + \left|  \l \GG_\varepsilon,\varphi\r  \right|\\
&\leqslant C\| \nabla \varphi\| _{L^4 L^2} + C\varepsilon^\sigma\big(\|  \partial _t \varphi \| _{L^{\frac 43} L^{2}}
+\| \nabla \varphi\| _{L^{4} L^{r_1}} \big) \\
& \leqslant C\| \nabla \varphi\| _{L^{4} L^{r_1}} + C \varepsilon^\sigma\| \partial _t \varphi \| _{L^{\frac 43} L^{2}}.
\ea
Recall that $\sigma>0$ and $r_{1} \in (2,3)$. Thus we have the following decomposition
\[\widetilde \uu_\varepsilon = \widetilde \uu_\varepsilon ^{(1)} + \varepsilon ^\sigma \widetilde \uu_\varepsilon ^{(2)},\]
where $\partial _t\widetilde \uu_\varepsilon ^{(1)}$ is uniformly bounded in $L^{\frac{4}{3}}(0,T; V^{-1,r_{1}'}(\Omega)) $ and $ \uue^{(2)}$ is uniformly bounded in $L^{4}(0,T; L^{2}(\Omega))$. Since $\uue$ is uniformly bounded in $L^{\infty}(0,T; L^{2}(\Omega))\cap L^{2}(0,T;L^{6}(\Omega))$, then $\uue^{(1)} = \uue -  \e^\sigma\uue^{(2)}$ is uniformly bounded in $L^{4}(0,T; L^{2}(\Omega))$. Thus, up to a subsequence,
$$
\uue^{(1)} \to \uu \ \mbox{weakly in} \  L^{4}(0,T; L^{2}(\Omega)).
$$

Recall $\uue$ is uniformly bounded in $L^{2}(0,T; W^{1,2}_{0}(\Omega))$. Then apply Aubin-Lions type argument (see Lemma 5.1 in \cite{ref6.1}) gives
\[\widetilde \uu_\varepsilon ^{(1)} \otimes \widetilde \uu_\varepsilon  \to \uu \otimes \uu \ \mbox{in} \ \mathcal{D}'( \Omega \times (0,T) ).\]
Clearly
$$
\e^{\sigma} \uue^{(2)} \otimes \uue \to 0 \ \mbox{strongly in} \ L^{\frac{4}{3}}(0,T; L^{\frac{3}{2}}(\Omega)) \cap L^{4}(0,T; L^{1}(\Omega)).
$$
Thus,
\ba\label{convective-conv-0}
\uue\otimes \uue \to \uu\otimes \uu \ \mbox{in} \ \mathcal{D}'( \Omega \times (0,T) ).
\ea
Together with the uniform estimates of $\uue$ in \eqref{est-u}, we deduce from \eqref{convective-conv-0} that
\ba\label{convective-conv}
\uue\otimes \uue \to \uu\otimes \uu \ \mbox{weakly in} \  L^{\frac{4}{3}}(0,T; L^{2}(\Omega) ).
\ea

\subsection{Passing to the limit}
Now we are ready to pass $\e \to 0$ and prove the following result in $\UU_{\e}$ from which we can deduce our desired limit equations in $\uu_{\e}$.
\begin{proposition}\label{thm-UP-crit}
Let $(\UU_\varepsilon, P_\varepsilon)$ be the solution of the equation \eqref{2-equa} and $\widetilde {P}_\varepsilon$ be the extension in $\Omega$ defined by \eqref{small-F-P}. If $\alpha>3$, i.e. $\lim _{\varepsilon  \to 0}\sigma _\varepsilon=\infty$,  then
\ba\label{U-P-conv}
&\widetilde \UU_\varepsilon  \to \UU \ \mbox{weakly({*}) \ in}\ W^{1,\infty}((0,T); L^2(\Omega))\cap W^{1,2}((0,T); W^{1,2}_{0}(\Omega)),\\
&\widetilde P_\varepsilon  \to P\ \mbox{weakly\ in}\ L^2(0,T; L_0^2(\Omega)).
\ea
Moreover, $(\UU,P)$ solves
\begin{equation}\label{equa-U-P-small}
\uu - \uu^0 + \dive \Psi - \mu \Delta \UU+\nabla P= \FF,\ \dive \UU=0 \  \mbox{in}\ \mathcal{D}'(\Omega\times(0,T)),
\end{equation}
with
\ba\label{U-Psi-F}
\UU(\cdot ,t) = \int_0^t \uu (\cdot,s)\dd s, \quad \Psi (\cdot ,t): = \int_0^t \uu(\cdot,s)   \otimes \uu(\cdot,s)\dd s, \quad \FF(\cdot ,t) = \int_0^t \ff (\cdot,s)\dd s,
\ea
where $\uu$ is the limit of $\uue$ given in \eqref{thm1-conv-0},  $\uu^{0}$ and $\ff$ are  given in \eqref{ini-force}.
\end{proposition}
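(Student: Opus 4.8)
The plan is to extract weak limits from the uniform bounds, identify them with the stated time integrals, and then pass to the limit in the integrated equation \eqref{2-equa} tested against $R_\varepsilon(\varphi)$, reading the pressure off \eqref{def-F-P} at the end. Concretely, I apply Banach--Alaoglu to \eqref{est-small-U} and \eqref{est-small-P} to extract, along a subsequence, the weak(*) limits \eqref{U-P-conv}. To identify $\UU$, observe $\partial_t\widetilde\UU_\varepsilon=\widetilde\uu_\varepsilon\to\uu$ weakly by \eqref{thm1-conv-0}; since the distributional time derivative is closed under weak limits and $\widetilde\UU_\varepsilon(\cdot,0)=0$, this gives $\partial_t\UU=\uu$ with $\UU(\cdot,0)=0$, i.e. $\UU(\cdot,t)=\int_0^t\uu\,\dd s$. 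The same reasoning with \eqref{convective-conv} and \eqref{Fe-F-st} yields $\Psi(\cdot,t)=\int_0^t\uu\otimes\uu\,\dd s$ and $\FF(\cdot,t)=\int_0^t\ff\,\dd s$ as in \eqref{U-Psi-F}; finally $\dive\widetilde\uu_\varepsilon=0$ in $\Omega$ forces $\dive\uu=0$ and hence $\dive\UU=0$.

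The decisive spatial ingredient is that, in the small-hole regime $\alpha>3$ (so $\sigma_\varepsilon\to\infty$), Allaire's restriction operator converges to the identity, $R_\varepsilon(\varphi)\to\varphi$ \emph{strongly} in $W^{1,2}_0(\Omega)$: the corrector $R_\varepsilon(\varphi)-\varphi$ is localized near the holes and, by the construction \cite{ref3,ref4} behind \eqref{pt-res}, carries a gradient of size $O(\sigma_\varepsilon^{-1})\to0$. Granting this, I test \eqref{2-equa} at each $t$ against the solenoidal field $R_\varepsilon(\varphi(\cdot,t))$, with $\varphi\in C_c^\infty(\Omega\times(0,T);\RR^3)$ and $\dive\varphi=0$, and integrate in $t$; the term $\l\nabla P_\varepsilon,R_\varepsilon(\varphi)\r$ vanishes and every remaining pairing contains one strongly convergent factor (either $R_\varepsilon(\varphi)$, $\nabla R_\varepsilon(\varphi)$, or the data $\widetilde\FF_\varepsilon,\widetilde\uu^0_\varepsilon$), so that the weak limits of $\widetilde\uu_\varepsilon$, $\nabla\widetilde\UU_\varepsilon$ and $\widetilde\Psi_\varepsilon$ (the last from \eqref{convective-conv}) pass through without a weak--weak product. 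This gives $\l\uu-\uu^0+\dive\Psi-\mu\Delta\UU-\FF,\varphi\r=0$ for every solenoidal $\varphi$, whence by de Rham (Lemma \ref{lemma-large}) the residual equals $-\nabla\Pi$ for some $\Pi\in L^2(0,T;L^2_0(\Omega))$.

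The main obstacle is the identification $\Pi=P$, since the pressure lives precisely on the gradient directions on which the solenoidal test above and the remainder bound \eqref{est-F-3D} give no information. I close it through \eqref{def-F-P}, $\l\nabla\widetilde P_\varepsilon,\varphi\r=\l\FF_\varepsilon-\uu_\varepsilon+\uu^0_\varepsilon+\mu\Delta\UU_\varepsilon-\dive\Psi_\varepsilon,R_\varepsilon(\varphi)\r_{\Omega_\varepsilon}$, taken now for \emph{arbitrary} $\varphi\in C_c^\infty(\Omega\times(0,T);\RR^3)$: the strong convergence $R_\varepsilon(\varphi)\to\varphi$ in $W^{1,2}_0$ again removes every weak--weak product on the right, giving $\l\nabla\widetilde P_\varepsilon,\varphi\r\to\l\FF-\uu+\uu^0+\mu\Delta\UU-\dive\Psi,\varphi\r=\l\nabla\Pi,\varphi\r$. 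Since $\widetilde P_\varepsilon\to P$ weakly forces $\nabla\widetilde P_\varepsilon\to\nabla P$ in $\mathcal{D}'$, we get $\nabla P=\nabla\Pi$; as both pressures have zero spatial average, $P=\Pi$. This is exactly \eqref{equa-U-P-small} with $P$ the weak limit of $\widetilde P_\varepsilon$, and the proof is complete.
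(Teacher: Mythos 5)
Your architecture is genuinely different from the paper's. The paper never tests with $R_\varepsilon(\varphi)$ at all in this step: it tests \eqref{2-equa} with the oscillating cell functions $w^i_{\eta,\varepsilon}\phi$, which are \emph{not} solenoidal, so the pressure term $\int \widetilde P_\varepsilon\, \dive(w^i_{\eta,\varepsilon}\phi)$ stays in the weak formulation and passes to the limit as a weak--strong product (using $\|\nabla w^i_{\eta,\varepsilon}\|_{L^2}\leqslant C\sigma_\varepsilon^{-1}\to 0$, hence $w^i_{\eta,\varepsilon}\to\overline w^i$ strongly in $L^2$ by \eqref{lem-cell-2}--\eqref{cell-3} and compact embedding); the distributional equation then follows from the invertibility of the positive definite matrix $A$. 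Your route (solenoidal testing, de Rham via Lemma \ref{lemma-large}, then pressure identification through the duality \eqref{def-F-P}) avoids the cell problem and the matrix $A$ entirely, and the logical skeleton --- including the final step $\nabla P=\nabla\Pi$, $P,\Pi$ of zero mean, hence $P=\Pi$ --- is sound. But it stands or falls with a single claim, and that claim is where the gap is.

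The claim is that $R_\varepsilon(\varphi)\to\varphi$ strongly in $W^{1,2}_0(\Omega)$ (after zero extension). This is \emph{not} among the properties \eqref{pt-res}, and it cannot be deduced from them as stated: \eqref{pt-res} gives only a uniform bound on $\nabla R_\varepsilon(\varphi)$, and boundedness says nothing about the corrector $R_\varepsilon(\varphi)-\varphi$ tending to zero; moreover, in Allaire's construction the corrections are supported on balls of radius comparable to $\varepsilon$, whose union has volume $O(1)$, so ``localized near the holes'' is not by itself an argument. You flag the assumption yourself (``Granting this''), but both of your limit passages depend on it --- most critically the pressure identification, since weak convergence $\widetilde P_\varepsilon\rightharpoonup P$ alone gives $\nabla\widetilde P_\varepsilon\to\nabla P$ in $\mathcal{D}'$ but gives no way to equate this limit with $\FF-\uu+\uu^0+\mu\Delta\UU-\dive\Psi$ without removing the weak--weak products on the right-hand side of \eqref{def-F-P}. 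So as written the proof is incomplete at its crux. The gap is fillable with the paper's own toolkit precisely because $\alpha>3$: with the cutoffs $g_\varepsilon$ of \eqref{est-g} and the Bogovskii operator of Lemma \ref{lem-Bog}, set $v_\varepsilon:=g_\varepsilon\varphi-\calB_\varepsilon(\nabla g_\varepsilon\cdot\varphi)$; then $v_\varepsilon\in W^{1,2}_0(\Omega_\varepsilon)$, $\dive v_\varepsilon=0$, and $\widetilde v_\varepsilon\to\varphi$ strongly in $W^{1,2}_0(\Omega)$ (the same estimates as for $I_1$--$I_8$ in Proposition \ref{moment-equa}, where $\sigma_\varepsilon^{-1}\to0$ enters). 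By linearity, the identity $R_\varepsilon(\widetilde v_\varepsilon)=v_\varepsilon$ from the first property in \eqref{pt-res}, and the uniform bound in \eqref{pt-res} applied to $\varphi-\widetilde v_\varepsilon$, one gets
\begin{equation*}
\|\nabla\big(R_\varepsilon(\varphi)-v_\varepsilon\big)\|_{L^2(\Omega_\varepsilon)}
=\|\nabla R_\varepsilon(\varphi-\widetilde v_\varepsilon)\|_{L^2(\Omega_\varepsilon)}
\leqslant C\big(\|\nabla(\varphi-\widetilde v_\varepsilon)\|_{L^2(\Omega)}+(1+\sigma_\varepsilon^{-1})\|\varphi-\widetilde v_\varepsilon\|_{L^2(\Omega)}\big)\to 0,
\end{equation*}
hence $R_\varepsilon(\varphi)\to\varphi$ strongly (for non-solenoidal $\varphi$, needed in your identification step, take simply $v_\varepsilon=g_\varepsilon\varphi$). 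With this supplement your argument closes; without it, it does not.
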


\begin{proof}
The weak convergences in \eqref{U-P-conv} follow immediately from the uniform estimates \eqref{est-small-U} and \eqref{est-small-P}. The divergence free condition $\dive \UU = 0$ follows from $\dive \widetilde \UU_{\e}=0$. Moreover, the weak convergence of $\uue$ in \eqref{thm1-conv-0} implies for each $\varphi \in C_{c}^{\infty}(\Omega\times (0,T);\RR^{3})$ that
\ba\label{U-u-small}
\int_{0}^{T} \int_{\Omega} \widetilde \UU_{\e} \cdot \varphi(t,x) \dd x \dd t & = \int_{0}^{T} \int_{\Omega} \int_{0}^{t} \uue(s,x) \dd s \cdot \varphi(t,x) \dd x \dd t \\
 & =  \int_{0}^{T}  \int_{0}^{t} \int_{\Omega} \uue(s,x)  \cdot \varphi(t,x) \dd x \dd s \dd t \\
& \to \int_{0}^{T}  \int_{0}^{t} \int_{\Omega} \uu(s,x)  \cdot \varphi(t,x) \dd x \dd s \dd t \\
& = \int_{0}^{T} \int_{\Omega}  \int_{0}^{t} \uu(s,x) \dd s \cdot \varphi(t,x) \dd x  \dd t.
 \ea
Thus, the uniqueness of weak limits implies
$$
 \UU(\cdot ,t) = \int_0^t \uu (\cdot,s)\dd s,
$$
which is the first equality in \eqref{U-Psi-F}.

\medskip

Given any scalar test function $\phi \in C_{c}^{\infty}(\Omega \times (0,T) )$, we choose $w ^i_{\eta,\varepsilon}\phi$ as a test function to \eqref{2-equa} to obtain the weak formulation:
\ba\label{weak-equa-small}
&\int_0^T \int_\Omega  \widetilde \uu_\varepsilon  \cdot (w_{\eta, \varepsilon}^i\phi)
-\widetilde \Psi _\varepsilon:\nabla (w_{\eta, \varepsilon}^i\phi)
+\mu \nabla \UU_\varepsilon :\nabla (w_{\eta ,\varepsilon }^i\phi)
-\widetilde  P_\varepsilon\dive(w_{\eta, \varepsilon}^i\phi )\dd x\dd t \\
&=\int_0^T \int_\Omega  \FF_\e \cdot (w_{\eta,\varepsilon}^i\phi )\dd x\dd t+ \int_{0}^{T}\int_\Omega \widetilde \uu_\e^0  \cdot (w_{\eta,\varepsilon}^i\phi )  \,\dd x\dd t.
\ea

By \eqref{lem-cell-2}, we know that $\|  \nabla w_{\eta, \varepsilon}^i\| _{L^2(\Omega)} \leqslant C\sigma_\varepsilon ^{-1} = C \e^{\frac{\alpha-3}{2}} \to 0$. Then  $w_{\eta, \varepsilon}^i \to {\overline w ^i}$ strongly in $L^2(\Omega)$ by Sobolev compact embedding. This implies
\ba\label{conv-small-1}
&\int_0^T \int_\Omega \widetilde\uu_\e \cdot w_{\eta, \varepsilon}^i \phi \dd x\dd t \to
\int_0^T \int_\Omega  \uu \cdot {\overline w}^i \phi \dd x\dd t,\\
&\int_{0}^{T}\int_\Omega \widetilde \uu_\e^0  \cdot (w_{\eta,\varepsilon}^i\phi )  \,\dd x\dd t \to
\int_{0}^{T} \int_\Omega  \uu^0 \cdot {\overline w}^i \phi \dd x\dd t.
\ea

For the nonlinear convective term one has
\[\int_0^T \int_\Omega  \widetilde \Psi_\e :\nabla (w_{\eta, \varepsilon }^i\phi )\dd x\dd t
= \int_0^T \int_\Omega  \widetilde \Psi_\e:\nabla w_{\eta, \varepsilon}^i\phi
+ \widetilde \Psi_\e: \nabla \phi  \otimes w_{\eta, \varepsilon}^i  \dd x\dd t,\]
where the first term on the right-hand side satisfies
\[\left| \int_0^T \int_\Omega  \widetilde \Psi_\e :\nabla w_{\eta ,\varepsilon }^i\phi \dd x\dd t \right| \leqslant  C\| \widetilde \uu_\varepsilon \| _{L^2 L^6}^2
\|  \nabla w_{\eta, \varepsilon}^i\| _{L^2}\| \phi \| _{L^\infty L^6}
\leqslant C \varepsilon ^{\frac{\alpha-3}{2}} \to 0.\]
For the second term, by the convergence of the convective term in \eqref{convective-conv} and the strong convergence of ${w_{\eta ,\varepsilon }^i}$, we have
\ba\nonumber
\int_0^T \int_\Omega  \widetilde \Psi_\e :\nabla \phi  \otimes w_{\eta ,\varepsilon }^i\dd x\dd t \to \int_0^T \int_\Omega   \Psi :\nabla \phi \otimes  \overline w ^i\dd x\dd t.
\ea
So we obtain
\ba\label{conv-small-3}
\int_0^T \int_\Omega  \widetilde \Psi_\e:\nabla (w_{\eta, \varepsilon}^i\phi)\dd x\dd t
\to \int_0^T \int_\Omega \Psi:\nabla ({\overline  w}^i\varphi )\dd x\dd t.
\ea

Using $\nabla w_{\eta ,\varepsilon }^i \to 0$ and the strong convergence of ${w_{\eta ,\varepsilon }^i}$ again implies
\ba\label{conv-small-4}
\int_0^T \int_\Omega  \nabla \widetilde \UU_\varepsilon:\nabla (w_{\eta, \varepsilon}^i\varphi)\dd x\dd t &
=\int_0^T \int_\Omega \nabla \widetilde \UU_\varepsilon :\nabla w_{\eta, \varepsilon}^i\varphi \dd x\dd t
+\int_0^T \int_\Omega  \nabla \widetilde \UU_\varepsilon : \nabla \varphi  \otimes w_{\eta, \varepsilon}^i\dd x\dd t\\
&\to \int_0^T \int_\Omega  \nabla \UU :\nabla \varphi  \otimes {\overline  w}^i\dd x\dd t
=\int_0^T \int_\Omega \nabla \UU :\nabla ({\overline  w}^i\varphi)\dd x\dd t.
\ea

For the term related to the pressure, again by the strong convergence of ${w_{\eta ,\varepsilon }^i}$, we have
\ba\label{conv-small-5}
\int_0^T \int_\Omega  \widetilde P_\e \dive (w_{\eta,\varepsilon}^i \phi) \dd x\dd t
&=\int_0^T \int_\Omega  \widetilde P_\e ( w_{\eta,\varepsilon}^i \cdot \nabla \phi) \dd x\dd t\\
&\to\int_0^T \int_\Omega  P(\overline w^i\cdot\phi) \dd x\dd t
=\int_0^T \int_\Omega   P \dive(\overline w^i \phi) \dd x\dd t.
\ea

For the external force term, by the strong convergence of $\widetilde \FF_{\e}$ in \eqref{Fe-F-st}, we have
\ba\label{conv-small-6}
\int_0^T \int_\Omega  \widetilde \FF_\e ( w_{\eta,\varepsilon}^i \phi) \dd x\dd t
\to\int_0^T \int_\Omega  \FF (\overline w^i\phi) \dd x\dd t.
\ea

Summarizing the convergences in  \eqref{conv-small-1}--\eqref{conv-small-6} and passing $\e\to 0$ in \eqref{weak-equa-small} implies
\ba\label{equa-U-P-small-weak}
&\int_0^T \int_\Omega  \uu  \cdot (\overline  w^i\phi)-\Psi :\nabla (\overline  w^i \phi)+\mu \nabla \UU:\nabla (\overline  w^i \phi)- P \dive(\overline  w^i \phi )\dd x\dd t \\&=\int_0^T \int_\Omega  \FF\cdot (\overline  w^i \phi )\dd x\dd t+ \int_0^T  \int_\Omega  \uu^0  \cdot (\overline  w^i\phi )  \,\dd x\dd t.
\ea
Equation \eqref{equa-U-P-small-weak} is equivalent to
\ba\nonumber
&\int_0^T \int_\Omega  \uu  \cdot (A\varphi)-\Psi :\nabla (A\varphi)+\mu \nabla \UU:\nabla (A\varphi)- P \dive(A\varphi )\dd x\dd t \\&=\int_0^T \int_\Omega  \FF\cdot (A\varphi )\dd x\dd t+\int_0^T  \int_\Omega \uu^0  \cdot (A\varphi )  \,\dd x\dd t
\ea
for any $\varphi\in C_c^\infty( \Omega \times (0,T) ;\mathbb{R}^3)$ with $A=(A_{i,j}) = (\overline w^{j})_{i} = (\overline w^{i})_{j}.$ As shown in \cite{Allaire91}, $A$ is a positive definite matrix.  This implies
\[\uu - \uu^0+\dive\Psi - \mu \Delta \UU+\nabla P-\FF = 0, \quad \mbox{in} \ \mathcal{D}'(\Omega\times (0,T)).\]
The proof of Proposition \ref{thm-UP-crit} is completed.
\end{proof}

\subsection{End of the proof}
From Proposition \ref{thm-UP-crit}, we can deduce the limit equations in $\uu$. Indeed, differentiating in $t$ to equation \eqref{equa-U-P-small} implies
\ba\label{eq-uu-pf-0}
\partial_t \uu+ \dive(\uu \otimes \uu) - \mu \Delta \uu+ \nabla {p} = \ff, \ \mbox{in} \ \mathcal{D}'(\Omega \times (0,T)),
\ea
with $p = \p_{t} P$.  By $\eqref{U-P-conv}_{2}$, we know that
\ba\nn
\widetilde p_{\e}  : = \p_{t} \widetilde P_{\e} \to p \ \mbox{weakly in} \ W^{-1,2}(0,T;L_{0}^{2}(\Omega)),
\ea
which is exactly \eqref{thm1-conv-p}.

Clearly $\dive \uu = 0$ which follows from $\dive \widetilde \uu_{\e} = 0$. Since $\uu \in L^{\infty}(0,T; L^{2}(\Omega)) \cap L^{2}(0,T;W^{1,2}_{0}(\Omega))$, using equation \eqref{eq-uu-pf-0} implies
\ba\nn
\partial_t \uu \in L^{\frac{4}{3}}(0,T;V^{-1,2}(\Omega)).
\ea
Thus
\ba\label{u-contin-t}
\uu \in C_{weak}([0,T), L^{2}(\Omega))\cap C([0,T), L^{q}(\Omega)),  \ \mbox{for any $1\leqslant q<2$}.
\ea
 We shall further show the attainment of initial datum for $\uu$. With such continuity of $\uu$ in time variable in \eqref{u-contin-t}, by density argument, we deduce from \eqref{eq-uu-pf-0} that for each $\varphi \in C_{c}^{1}(\Omega \times [0,T);\RR^{3}), \ \dive \varphi =0$,
\ba\label{u-weak-1}
\int_{0}^{T} \int_\Omega - \uu \p_{t} \varphi - \uu\otimes \uu : \nabla \varphi  + \nabla \uu :\nabla \varphi \dd x \dd t  = \int_{0}^{T} \int_\Omega \ff \cdot \varphi + \int_{\Omega} \uu(x, 0) \varphi(x,0) \dd x.
\ea

Following the proof of Proposition \ref{moment-equa}, we can deduce for  each $\varphi \in C_{c}^{1}(\Omega \times [0,T);\RR^{3}), \ \dive \varphi =0$ that
\ba\label{u-weak-2}
\int_{0}^{T} \int_\Omega - \uue \p_{t} \varphi - \uue \otimes \uue : \nabla \varphi  + \nabla \uue :\nabla \varphi \dd x \dd t  = \int_{0}^{T} \int_\Omega \widetilde \ff_{\e} \cdot \varphi + \int_{\Omega} \uu_\e^{0}(x) \varphi(x,0) \dd x + {\mathbf H}_{\e}(\varphi),
\ea
where the remainder term ${\mathbf H}_{\e}(\varphi)$ satisfies
$$
|{\mathbf{H}}_{\e}(\varphi)| \leq C \e^{\sigma} \big(\|  \partial _t \varphi \| _{L^{\frac 43} L^{2}}
+\| \nabla \varphi\| _{L^{4} L^{r_1}}  + \| \varphi(\cdot,0)\|_{L^{r_{1}^{*}}(\Omega)}\big),
$$
with $\sigma>0$ and $2<r_{1}<3$ given in Proposition \ref{moment-equa} and $r_{1}^{*}$ the Sobolev conjugate number of $r_{1}$ such that $\frac{1}{r_{1}^{*}} = \frac{1}{r_{1}} - \frac{1}{3}$.

By the convergence of the convective term in \eqref{convective-conv}, passing $\e\to 0$ in \eqref{u-weak-2} implies
\ba\label{u-weak-3}
\int_{0}^{T} \int_\Omega - \uu \p_{t} \varphi - \uu\otimes \uu : \nabla \varphi  + \nabla \uu :\nabla \varphi \dd x \dd t  = \int_{0}^{T} \int_\Omega \ff \cdot \varphi + \int_{\Omega} \uu^{0}(x) \varphi(x,0) \dd x.
\ea
Comparing \eqref{u-weak-1} and \eqref{u-weak-3} implies the attainment of initial datum:
$$
\uu|_{t=0} = \uu^{0}.
$$

\medskip


We obtain system \eqref{equa-u-p-small} and thus complete the proof of  Theorem \ref{thm-1}.

\section{Proof of Theorem \ref{thm-3}}\label{sec-thm3}\label{sec-thm2}

In this section we shall prove Theorem \ref{thm-3} where $a_{\e} = \e^{\alpha}$ \eqref{ae-se} with $1<\alpha <3$ and $\sigma_{\e} = \e^{\frac{3-\alpha}{2}}$ concerning the case of large holes. In this case we consider the time-scaled Navier-Stokes system \eqref{1-time-equa}.

In the case of large holes, one can benefit from the zero boundary condition on the holes and obtain the following perforation version of Poincar\'e inequality (see Lemma 3.4.1 in \cite{ref4}):
\begin{lemma}\label{lem-Poincare}
Let $\Omega_{\e}$ be the perforated domain defined by \eqref{1-hole} and \eqref{1-domain} with  $a_{\e} = \e^{\alpha}, \ 1<\alpha <3$. There exists a constant $C$ independent of $\varepsilon$ such that
\begin{equation}
\|u\|_{{L^2}({\Omega _\varepsilon })} \leqslant C\sigma_\varepsilon \|\nabla u\|_{{L^2}({\Omega _\varepsilon })}, \quad \mbox{for any $u \in W_0^{1,2}({\Omega _\varepsilon })$.}
\label{11}
\end{equation}

\end{lemma}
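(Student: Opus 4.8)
The plan is to establish a cellwise version of \eqref{11} with the sharp $\varepsilon$-scaling and then sum over all cells, handling the thin layer along $\partial\Omega$ separately. First I would extend $u$ by zero to $\widetilde u\in W_0^{1,2}(\Omega)$; since $u$ vanishes on every hole boundary and on $\partial\Omega$, the extension $\widetilde u$ vanishes on all holes $T_{\varepsilon,k}$ and on $\partial\Omega$, with $\|\widetilde u\|_{L^2(\Omega)}=\|u\|_{L^2(\Omega_\varepsilon)}$ and $\|\nabla\widetilde u\|_{L^2(\Omega)}=\|\nabla u\|_{L^2(\Omega_\varepsilon)}$. For a fixed interior cell $\varepsilon Q_k$ with $k\in K_\varepsilon$, rescale by $y=(x-\varepsilon x_k)/\varepsilon$ and set $w(y)=\widetilde u(\varepsilon x_k+\varepsilon y)$; the hole becomes $\eta T_0$ with $\eta=a_\varepsilon/\varepsilon=\varepsilon^{\alpha-1}$, and the two sides of the claimed cell inequality scale by $\varepsilon^d$ and $\varepsilon^{d-2}$. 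Since $\sigma_\varepsilon^2\varepsilon^{-2}=(\varepsilon/a_\varepsilon)^{d-2}=\eta^{2-d}$ by \eqref{1-sigma} and \eqref{ae-se}, it suffices to prove the unit-cell estimate
\[
\|w\|_{L^2(Q_0)}^2 \leq C\,\eta^{2-d}\,\|\nabla w\|_{L^2(Q_0)}^2
\]
for every $w\in W^{1,2}(Q_0)$ with $w\equiv 0$ on $\eta T_0$ (here $d=3$, so $\eta^{2-d}=\eta^{-1}$).

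The heart of the matter is this unit-cell estimate, which I would obtain by a capacity-type argument. Let $\overline w:=|Q_0|^{-1}\int_{Q_0}w$. The Poincaré--Wirtinger and Sobolev inequalities on the cube give
\[
\|w-\overline w\|_{L^2(Q_0)}+\|w-\overline w\|_{L^{2^{*}}(Q_0)} \leq C\|\nabla w\|_{L^2(Q_0)},\qquad 2^{*}=\tfrac{2d}{d-2}.
\]
By the geometry \eqref{1-hole}, the scaled hole $\eta T_0$ contains a ball $B$ of radius comparable to $\eta$, on which $w$ vanishes; writing $\overline w=|B|^{-1}\int_B(\overline w-w)$ and applying Hölder with exponent $2^{*}$ yields
\[
|\overline w|\leq |B|^{-1/2^{*}}\|w-\overline w\|_{L^{2^{*}}(B)}\leq C\,\eta^{(2-d)/2}\|\nabla w\|_{L^2(Q_0)}.
\]
Combining $\|w\|_{L^2(Q_0)}^2\leq 2\|w-\overline w\|_{L^2(Q_0)}^2+2|\overline w|^2|Q_0|$ with $\eta^{2-d}\geq 1$ then gives the unit-cell bound. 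This is precisely the step that produces the exponent $2-d$, hence the exact power in $\sigma_\varepsilon$: a crude $L^2$ bound of $\overline w$ over the hole would only give $\eta^{-d}$, so it is essential to use the Sobolev exponent $2^{*}$ together with the fact (from \eqref{1-hole}) that each hole contains a ball of comparable radius. I expect this to be the main obstacle.

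Finally I would pass from cells to $\Omega_\varepsilon$. Summing the rescaled cell estimate $\int_{\varepsilon Q_k}|\widetilde u|^2\leq C\sigma_\varepsilon^2\int_{\varepsilon Q_k}|\nabla\widetilde u|^2$ over all $k\in K_\varepsilon$ bounds $\widetilde u$ on the interior region $\bigcup_{k\in K_\varepsilon}\varepsilon Q_k$ by $C\sigma_\varepsilon^2\|\nabla u\|_{L^2(\Omega_\varepsilon)}^2$. It remains to control $\widetilde u$ on the boundary layer $\Lambda_\varepsilon:=\Omega\setminus\bigcup_{k\in K_\varepsilon}\varepsilon Q_k$, a strip of width $O(\varepsilon)$ along $\partial\Omega$ that contains no holes. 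Here I would use that $\widetilde u\in W_0^{1,2}(\Omega)$ together with a boundary Poincaré inequality on the $C^{2,\beta}$ domain $\Omega$ (via Hardy's inequality $\int_\Omega |\widetilde u|^2/\mathrm{dist}(x,\partial\Omega)^2\,dx\leq C\|\nabla\widetilde u\|_{L^2(\Omega)}^2$, or a tubular-neighborhood argument), which gives $\int_{\Lambda_\varepsilon}|\widetilde u|^2\leq C\varepsilon^2\|\nabla u\|_{L^2(\Omega_\varepsilon)}^2$. Since $\alpha>1$ we have $\varepsilon^2\leq\varepsilon^{3-\alpha}=\sigma_\varepsilon^2$, so this contribution is also $\leq C\sigma_\varepsilon^2\|\nabla u\|_{L^2(\Omega_\varepsilon)}^2$; adding the two pieces yields \eqref{11}.
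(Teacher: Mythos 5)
Your proposal is correct, and it supplies something the paper itself does not: the paper gives no proof of this lemma at all, simply citing Lemma 3.4.1 of Allaire's work \cite{ref4}. Your argument is a sound self-contained reconstruction of the standard proof behind that citation. All three ingredients check out. The zero extension $\widetilde u\in W_0^{1,2}(\Omega)$ with $\nabla\widetilde u=\widetilde{\nabla u}$ is legitimate because $u$ has zero trace on every component of $\partial\Omega_\varepsilon$. The scaling bookkeeping is right: $\sigma_\varepsilon^2\varepsilon^{-2}=(\varepsilon/a_\varepsilon)^{d-2}=\eta^{2-d}$, so the cell inequality reduces to the unit-cell bound $\|w\|_{L^2(Q_0)}^2\leq C\eta^{2-d}\|\nabla w\|_{L^2(Q_0)}^2$, and your capacity-type derivation of it is the crux done correctly — using the Sobolev exponent $2^*$ over the ball $B(0,\delta_0\eta)\subset\eta T_0$ (guaranteed by \eqref{1-hole}) to get $|\overline w|\leq C\eta^{(2-d)/2}\|\nabla w\|_{L^2(Q_0)}$, which is exactly the sharp power; as you note, a plain $L^2$ average would lose a factor and give only $\eta^{-d}$. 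The constant is uniform in $k$ since every cell is an exact translate of the model configuration. Finally, the boundary layer $\Lambda_\varepsilon$ lies within distance $O(\varepsilon)$ of $\partial\Omega$, so Hardy's inequality (valid on the $C^{2,\beta}$ domain $\Omega$) gives $\int_{\Lambda_\varepsilon}|\widetilde u|^2\leq C\varepsilon^2\|\nabla u\|_{L^2(\Omega_\varepsilon)}^2$, and $\varepsilon^2\leq\sigma_\varepsilon^2$ precisely because $\alpha\geq 1$ — so this term is absorbed. Compared with the classical proofs (Allaire, and Tartar for $\alpha=1$), which typically obtain the unit-cell estimate by integrating along rays from the hole, your Sobolev--Poincaré route is slightly slicker and equally sharp; either way the cell-plus-boundary-layer structure is the same.
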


\subsection{Estimates of velocity}
From the energy inequality \eqref{6}, using Poincar\'e inequality \eqref{11} and  H\"older's inequality gives
\ba\nn
&\frac{1}{2}\sigma _\varepsilon^2\int_{\Omega_\varepsilon} \left| {\uu}_\varepsilon(x,t) \right|^2 \dd x
+ \int_0^t \int_{\Omega _\varepsilon} \left| \nabla \uu_\varepsilon (x,s) \right|^2 \dd x\dd s\\
&\leqslant \int_0^t \int_{\Omega _\varepsilon} \ff_{\e} \cdot \uu_\varepsilon  \dd x\dd s
+ \frac{1}{2}\sigma _\varepsilon^2 \int_{\Omega_\varepsilon} \left| {\uu}^0_{\e}(x) \right|^2 \dd x\\
&\leqslant\| \ff_{\e}\|  _{L^{2}(0,t; L^2(\Omega_\varepsilon))}
\|  {\uu}_\varepsilon \| _{L^{2}(0,t; L^2(\Omega_\varepsilon ))}
+ \frac{1}{2}\sigma_\varepsilon^2\sup_{0<\e\leqslant 1} \|\uu_{\e}^{0}\|_{L^{2}(\Omega_{\e})}^{2} \\
&\leqslant C \sigma _\e\| \ff_{\e}\|  _{L^{2}(0,t; L^2(\Omega _\varepsilon))}
\|  \nabla \uu_\varepsilon\| _{L^{2}(0,t; L^2(\Omega_\varepsilon))}
+\frac{1}{2}\sigma _\varepsilon^2 \sup_{0<\e\leqslant 1} \|\uu_{\e}^{0}\|_{L^{2}(\Omega_{\e})}^{2}\\
& \leqslant C \sigma _\varepsilon^2 \sup_{0<\e\leqslant 1}\| \ff_{\e}\|  _{L^{2}(0,T; {L^2}({\Omega _\varepsilon }))}^{2} + \frac{1}{2}\| {{{\nabla \uu}_\varepsilon }}\| _{L^{2}(0,t; {L^2}({\Omega _\varepsilon }))}^{2} + \frac{1}{2}\sigma _\varepsilon^2\sup_{0<\e\leqslant1} \|\uu_{\e}^{0}\|_{L^{2}(\Omega_{\e})}^{2}.
\ea
This implies
\begin{equation}\label{est-large-ue}
\|  {\uu}_\varepsilon \| _{L^{\infty}(0,T;L^2(\Omega _\varepsilon))} \leqslant C, \quad\| \nabla \uu_\varepsilon \| _{L^2 (0,T;L^2(\Omega _\varepsilon))}
\leqslant C \sigma_\e,\quad\|  \uu_\varepsilon \| _{L^2 (0,T;L^2(\Omega_\varepsilon))}
\leqslant C\sigma_\e^2,
\end{equation}
where we used again \eqref{11}.  Therefore the extension satisfies
\begin{equation}\label{3-est-u}
\|  \uue\| _{L^{\infty}(0,T;L^2(\Omega))} \leqslant C, \quad\| \nabla \uue \| _{L^2 (0,T;L^2(\Omega))}
\leqslant C \sigma_\e,\quad\|  \uue \| _{L^2 (0,T;L^2(\Omega))}
\leqslant C \sigma_\e^2.
\end{equation}
Then, up to a subsequence, we have the convergence
\ba\label{UUe-conv-large}
\sigma_\e^{-2} \uue\to \uu \ \mbox{weakly in} \ L^{2}(0,T; L^2(\Omega)),
\ea
which is exactly \eqref{thm2-u}.

By \eqref{1-new-def}, it follows from \eqref{3-est-u} that
\begin{equation}\label{3-estimate-U}
 \|\widetilde \UU_\varepsilon  \|_{W^{1,\infty}(0,T; L^2(\Omega))}\leqslant C,\quad \|  \nabla \widetilde \UU_\varepsilon  \|_{W^{1,2}(0,T; L^{2} (\Omega))}\leqslant C \sigma_\e,\quad
\| \widetilde \UU_\varepsilon  \|_{W^{1,2}(0,T; L^{2} (\Omega))}\leqslant C \sigma_\e^{2}.
\end{equation}
Then, up to a subsequence,
\ba\label{UUe-conv-large}
\sigma_\e^{-2} \widetilde \UU_\varepsilon \to \UU \ \mbox{weakly in} \ W^{1,2}(0,T; L^2(\Omega)).
\ea
As the argument in \eqref{U-u-small}, the uniqueness of the weak limits implies that
\ba\label{U-u-large}
\UU(\cdot ,t) = \int_0^t \uu (\cdot,s)\dd s.
\ea

\subsection{Extension of pressure}
Let $P_{\e}$  be given in the Stokes equations \eqref{2-equa-large-holes}. Recall the estimate of $R_\varepsilon$ in \eqref{pt-res-2}:
\[
\|\nabla R_\e(\varphi)\|_{L^q (0,T; L^{2}(\Omega_\e))} \leqslant C \, \big( \|\nabla \varphi\|_{L^q (0,T; L^{2}(\Omega_\e))} + \sigma_{\e}^{-1} \|\varphi\|_{L^q (0,T; L^{2}(\Omega_\e))}\big),
\]
which will be repeatedly used in this section. Here  $\sigma_{\e} \to 0$ for the case of large holes. As in Section \ref{sec:extP-small}, we define a functional $\widetilde H_\e$ in $\mathcal{D}'(\Omega\times(0,T);\RR^{3})$ by the following dual formulation: for each $\varphi \in C_c^\infty(\Omega\times(0,T);\RR^{3}),$
\ba\label{def-large-F}
\l   \widetilde H_\e ,\varphi \r_{\Omega\times(0,T)}&
=\l\nabla {P_\varepsilon}(t), R_\varepsilon (\varphi) \r_{\Omega_{\e}\times(0,T)}\\
&=\l\FF_\e(t)-\sigma^2_\e\uu_\varepsilon(t)+\sigma^2_\e\uu^0_\e + \mu \Delta \UU_\varepsilon(t)
- \dive  \Psi_\varepsilon (t),{R_\varepsilon}(\varphi) \r_{\Omega_{\e}\times(0,T)}.
\ea

Using the uniform estimates in \eqref{est-large-ue}--\eqref{3-estimate-U} implies
\ba\label{est-P-large-1}
\left|  \l \dive  \Psi_\varepsilon(t), R_\varepsilon(\varphi)\r_{\Omega_\varepsilon\times(0,T)} \right| &
= \left|  \l \Psi_\varepsilon(t), \nabla R_\varepsilon(\varphi)\r_{\Omega_\varepsilon\times(0,T)} \right| \\
&\leqslant\|   \Psi_{\e}\| _{L^\infty L^2}
\|  \nabla R_\varepsilon (\varphi)\| _{L^2 L^2}\\
&\leqslant\|    \uu_\varepsilon  \otimes \uu_\varepsilon\| _{L^1 L^2}
\|  \nabla R_\varepsilon (\varphi)\| _{L^2 L^2}\\
&\leqslant C\|    \uu_\varepsilon \| _{L^2 L^6}^2(\|  \nabla \varphi \| _{L^2 L^2}
+ \sigma_\e^{-1} \| \varphi \| _{L^2 L^2})\\
&=C\sigma_\varepsilon^2\| \nabla \varphi \| _{L^2 L^2}
+ C\sigma_\varepsilon\| \varphi \| _{L^2 L^2}.
\ea
and
\ba\label{est-P-large-2}
\left|  \l \Delta \UU_\varepsilon(t), R_\varepsilon (\varphi) \r_{\Omega_\varepsilon\times(0,T)} \right|&
\leqslant\| \nabla \UU_\varepsilon(t)\| _{L^2 L^2}
\|  \nabla R_\varepsilon (\varphi)\| _{L^2 L^2}\\
&\leqslant C{\sigma _\varepsilon }(\|  \nabla \varphi \| _{L^2 L^2}
+ \sigma_\e^{-1} \| \varphi \| _{L^2 L^2})\\
&=C\sigma _\varepsilon\| \nabla \varphi \| _{L^2 L^2}
+ C\|  \varphi \| _{L^2 L^2}.
\ea
By Poinca\'re inequality in Lemma \ref{lem-Poincare}, we obtain
\ba\label{est-P-large-3}
\left| \l \sigma^2_\e{\uu}_\e, R_\varepsilon (\varphi) \r_{\Omega _\varepsilon\times(0,T)} \right|
&\leqslant C\sigma^2_\e \| \uu_\e\| _{L^\infty(0,T; L^2(\Omega _\varepsilon))} \sigma_\e
\|  \nabla R_\varepsilon (\varphi) \| _{L^2 L^2} \\
&\leqslant C(\sigma_\e^3\|  \nabla \varphi\| _{L^2 L^2}+\sigma_\e^2\|  \varphi\| _{L^2 L^2}),\\
\left| \l \sigma^2_\e{\uu}^0_\e, R_\varepsilon (\varphi) \r_{\Omega _\varepsilon\times(0,T)} \right|
&\leqslant C\sigma^2_\e \| \uu^0_\e\| _{L^2} \sigma_\e
\|  \nabla R_\varepsilon (\varphi) \| _{L^2 L^2}\\
&\leqslant C(\sigma_\e^3\|  \nabla \varphi\| _{L^2 L^2}+\sigma_\e^2\|  \varphi\| _{L^2 L^2}),\\
\left| \l \FF_\e (t), R_\varepsilon (\varphi) \r_{\Omega _\varepsilon\times(0,T)} \right|
&\leqslant C \sigma_\e\| \FF_\e(t)\| _{L^2 L^2}\| \nabla R_\varepsilon (\varphi) \| _{L^2 L^2} \\
&\leqslant C(\sigma_\e\|  \nabla \varphi\| _{L^2 L^2}+\|  \varphi\| _{L^2 L^2}).
\ea
From \eqref{est-P-large-1}--\eqref{est-P-large-3} we deduce that
\ba\label{est-large-F}
\l \widetilde H_\e,\varphi \r_{\Omega\times(0,T)}
&\leqslant C (\| \varphi \| _{L^2 L^2}+ \sigma_\varepsilon\| \nabla \varphi\| _{L^2 L^2}).
\ea
This indicates $\widetilde H_{\e}$ is bounded in $L^{2}(0,T, L^{2}(\Omega)) + \sigma_{\e} L^{2}(0,T, W^{-1,2}(\Omega)) $.

On the other hand,
\ba
\l \partial_t \widetilde H_\e, \varphi \r_{\Omega\times(0,T)}&
=-\l \widetilde H_\e, \partial_t\varphi \r_{\Omega\times(0,T)}
=-\l \nabla  P_\varepsilon, R_\varepsilon (\partial_t \varphi) \r_{\Omega_\e\times(0,T)}\\
&=-\l\FF_\e(t)-\sigma^2_\e\uu_\varepsilon(t)+\sigma^2_\e\uu^0_\e + \mu \Delta \UU_\varepsilon(t)
-{\Phi_\varepsilon}(t), R_\varepsilon (\partial_t \varphi) \r_{\Omega_\e\times(0,T)}\\
&=\l \ff_\e, R_\varepsilon (\varphi) \r + \l\sigma^2_\e \uu_\varepsilon, R_\varepsilon (\partial_t \varphi) \r
+\l \mu \Delta \uu_\varepsilon, R_\varepsilon (\varphi) \r-\l \dive  (\uu_\e\otimes\uu_\e), R_\varepsilon (\varphi)\r.
\nn\ea
By  the uniform estimates in \eqref{est-large-ue}--\eqref{3-estimate-U} and Lemma \ref{lem-Poincare}, we have
\ba
\left|\l \ff_\e ,R_\varepsilon (\varphi) \r \right| & \leqslant C \| \ff_\e\| _{L^2 L^2}\|  R_\varepsilon (\varphi)\| _{L^2 L^2}
\leqslant C (\sigma_\e\| \nabla \varphi\| _{L^2 L^2}
+\| \varphi\| _{L^2 L^2}), \\
\left|\l\sigma^2_\e \uu_\varepsilon, R_\varepsilon (\partial_t \varphi)\r\right|
& \leqslant C \sigma_\e^2 \sigma_\e\| \nabla R_\varepsilon (\partial_t\varphi)\| _{L^2 L^2}
\leqslant C\sigma_\e^2 (\sigma_\e\| \nabla \partial_t\varphi\| _{L^2 L^2}
+\| \partial_t\varphi\| _{L^2 L^2}),\\
| \l \mu \Delta \uu_\varepsilon, R_\varepsilon (\varphi) \r| & \leqslant C\| \nabla \uu_\varepsilon\| _{L^2 L^2}
\|  \nabla R_\varepsilon (\varphi)\| _{L^2 L^2} \leqslant C (\sigma_\e\| \nabla \varphi\| _{L^2 L^2}
+\| \varphi\| _{L^2 L^2}),\\
| \l \dive  (\uu_\e\otimes\uu_\e), R_\varepsilon (\varphi)\r | &\leqslant C
\|  \widetilde \uu_\varepsilon \otimes\widetilde \uu_\varepsilon\| _{L^{\frac43} L^2}
\|  \nabla R_\varepsilon (\varphi)\| _{L^4 L^2} \leqslant C\sigma_\e^{\frac32}
(\|  \nabla \varphi\| _{L^4 L^2}
+ \sigma_\e^{-1}\|  \varphi\| _{L^4 L^2}).
\nn\ea
This shows that
\ba\nn
\left| \l \partial_t \widetilde H_\e, \varphi \r_{\Omega\times(0,T)}\right|
&\leqslant C (\|   \varphi\| _{L^2 L^2}+\sigma_\e\|  \nabla \varphi\| _{L^2 L^2}
+\sigma_\e^{\frac12}\| \varphi\| _{L^4 L^2}+\sigma_\e^{\frac32}\| \nabla \varphi\| _{L^4 L^2}\\
&\quad\quad\quad\quad\quad\quad\sigma_\e^2\| \partial_t\varphi\| _{L^2 L^2}+\sigma_\e^3\| \nabla \partial_t\varphi\| _{L^2 L^2})\\
&\leqslant C\|  \varphi\| _{L^2 L^2}+C\sigma_\e^{\frac12}\| \nabla \varphi\| _{L^4 L^2}
+C\sigma_\e^2\| \nabla \partial_t\varphi\| _{L^2 L^2}.
\ea
Therefore, $\widetilde H_\e$ is uniformly bounded in $W^{1,2} L^2 +\sigma_\e^{\frac12}  W^{1,\frac{4}{3}} W^{-1,2} + \sigma_\e^2 L^2 W^{-1,2}$. Moreover, the second property of the restriction operator in \eqref{pt-res} implies that
\ba
\l \widetilde H_\e, \varphi \r_{\Omega\times(0,T)}=0,\ \forall  \varphi\in C_{c}^{\infty}(\Omega \times (0,T)  ;\RR^{3}), \ \dive  \varphi=0.
\nn\ea
Thus, we can apply Lemma \ref{lemma-large} and deduce that there exists $\widetilde P_\e \in W^{1,2} (W^{1,2}\cap L_{0}^{2}) + \sigma_\e^{\frac12} W^{1,\frac43} L^2_0+\sigma_\e^2 L^2 L^2_0 $ such that
\ba\label{large-F-P-1}
\widetilde H_\e =\widetilde H_\e^{(1)} + \sigma_\e^{\frac12} \widetilde H_\e^{(2)}+\sigma_\e^2 \widetilde H_\e^{(3)},\quad
\widetilde P_\e =\widetilde P_\e^{(1)} + \sigma_\e^{\frac12} \widetilde P_\e^{(2)}+\sigma_\e^2 \widetilde P_\e^{(3)},
\ea
with
\ba\label{large-F-P-2}
\widetilde H_\e^{(1)} =\nabla \widetilde P_\e^{(1)},\quad
\widetilde H_\e^{(2)} =\nabla \widetilde P_\e^{(2)},\quad
\widetilde H_\e^{(3)} =\nabla \widetilde P_\e^{(3)},
\ea
and
\ba\label{est-large-P-2}
& \|   \widetilde P_\e^{(1)} \|_{W^{1,2} W^{1,2}}\leqslant \| \widetilde H_\e^{(1)}  \|_{W^{1,2} L^2} \leqslant C,\\
& \|   \widetilde P_\e^{(2)} \|_{W^{1,\frac43} L^2_0}\leqslant  \| \widetilde H_\e^{(2)}  \|_{W^{1,\frac43} W^{-1,2}} \leqslant C,\\
& \| \widetilde P_\e^{(3)} \|_{L^2 L^2_0}\leqslant  \| \widetilde H_\e^{(3)}  \|_{L^2 W^{-1,2}}\leqslant C.
\ea


\subsection{Passing to the limit}

Instead of showing the limit equations in $\uu$ directly, we prove the following results in $\UU$:
\begin{proposition}\label{prop-UP-large}
Let $(\UU_\varepsilon, P_\varepsilon)$ be the weak solution of equation \eqref{2-equa-large-holes} and $(\widetilde \UU_\varepsilon, \widetilde P_\varepsilon)$ be their extension in $\Omega$ defined in \eqref{1-new-def}, \eqref{def-large-F}, \eqref{large-F-P-1} and \eqref{large-F-P-2}. If ${\lim _{\varepsilon  \to 0}}{\sigma _\varepsilon } = 0~i.e. ~1<\alpha<3$,  then
\ba\label{conv-large-U}
\sigma_\e^{-2}\widetilde \UU_\varepsilon \to \UU\ \mbox{weakly\ in} \ L^2 (0,T;L^2(\Omega;\mathbb{R}^3)),
\ea
and
\ba\label{conv-large-P-1}
\widetilde P_\varepsilon=\widetilde P_\varepsilon^{(1)}+\sigma_\e^{\frac{1}{2}}\widetilde P_\varepsilon^{(2)}+\sigma_\e^2 \widetilde P_\varepsilon^{(3)},
\ea
which satisfies the estimates in \eqref{est-large-P-2} and
\ba\label{conv-large-P-2}
\widetilde P_\varepsilon^{(1)}  \to P\ \mbox{weakly\ in} \ W^{1,2}(0,T;W^{1,2}(\Omega)).
\ea

Moreover, $(\UU,P)$ solves
\begin{equation}\label{equa-U-P-large}
\mu \UU = A(\FF- \nabla P),\ \mbox{in}\  \mathcal{D}'(\Omega\times (0,T)),
\end{equation}
with
\ba\label{U-F}
\UU(\cdot ,t) = \int_0^t \uu (\cdot,s)\dd s, \quad \FF(\cdot ,t) = \int_0^t \ff (\cdot,s)\dd s.
\ea
\end{proposition}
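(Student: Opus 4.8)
The plan is to follow the architecture of Proposition~\ref{thm-UP-crit}, but with one decisive new feature: in the large-hole regime the viscous term $-\mu\Delta\UU_\varepsilon$ does \emph{not} disappear, but instead produces the zeroth-order Darcy term $\mu\UU$ through a duality with the generalized cell problem. First I would record the soft parts. The weak convergence $\sigma_\varepsilon^{-2}\widetilde\UU_\varepsilon\to\UU$ in $L^2(0,T;L^2(\Omega))$ is already \eqref{UUe-conv-large}; the pressure splitting \eqref{conv-large-P-1} and the uniform bounds are \eqref{est-large-P-2}, from which a subsequence gives $\widetilde P^{(1)}_\varepsilon\to P$ weakly in $W^{1,2}(0,T;W^{1,2}(\Omega))$. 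The identity $\UU(\cdot,t)=\int_0^t\uu(\cdot,s)\,\dd s$ follows exactly as in \eqref{U-u-small}--\eqref{U-u-large}, $\FF(\cdot,t)=\int_0^t\ff(\cdot,s)\,\dd s$ from \eqref{Fe-F-st}, and $\dive\UU=0$ from $\dive\widetilde\UU_\varepsilon=0$.

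The core step is to test the time-integrated equation \eqref{2-equa-large-holes} with $w^i_{\eta,\varepsilon}\phi$, for a scalar $\phi\in C_c^\infty(\Omega\times(0,T))$, where $w^i_{\eta,\varepsilon}$ is the scaled cell solution. Note $w^i_{\eta,\varepsilon}\phi\in W^{1,2}_0(\Omega_\varepsilon)$ is admissible but \emph{not} solenoidal, so the pressure term genuinely contributes; this is precisely what yields Darcy's law. The decisive manoeuvre is to rewrite the viscous contribution $\mu\int\nabla\UU_\varepsilon:\nabla w^i_{\eta,\varepsilon}\,\phi$ by testing the rescaled cell equation $-\Delta w^i_{\eta,\varepsilon}+\varepsilon^{-1}\nabla q^i_{\eta,\varepsilon}=\sigma_\varepsilon^{-2}e^i$ (obtained from \eqref{cell-general} on dividing by $\varepsilon^2$) against $\UU_\varepsilon\phi$, which is legitimate because $\UU_\varepsilon$ vanishes on the holes and $\dive\UU_\varepsilon=0$. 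By the symmetry of the Dirichlet form this gives
\[
\mu\int\nabla\UU_\varepsilon:\nabla w^i_{\eta,\varepsilon}\,\phi=\frac{\mu}{\sigma_\varepsilon^2}\int(\UU_\varepsilon)_i\,\phi+\frac{\mu}{\varepsilon}\int q^i_{\eta,\varepsilon}\,(\UU_\varepsilon\cdot\nabla\phi)-\mu\int\nabla w^i_{\eta,\varepsilon}:(\UU_\varepsilon\otimes\nabla\phi).
\]
Only the first term on the right is of order $O(1)$: since $\sigma_\varepsilon^{-2}\widetilde\UU_\varepsilon\to\UU$ weakly in $L^2$, it converges to $\mu\int\UU_i\phi$, delivering the Darcy term.

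I would then check that every remaining contribution is $O(\sigma_\varepsilon^{1/2})$ or smaller, using \eqref{est-large-ue}--\eqref{3-estimate-U}, the cell bounds \eqref{lem-cell-2}, and $\|\uu_\varepsilon\|_{L^2L^6}\le C\sigma_\varepsilon$. The inertial terms $\sigma_\varepsilon^2\int\uu_\varepsilon\cdot w^i_{\eta,\varepsilon}\phi$ and $\sigma_\varepsilon^2\int\uu^0_\varepsilon\cdot w^i_{\eta,\varepsilon}\phi$ carry the prefactor $\sigma_\varepsilon^2$; both halves of the convective term $\int\Psi_\varepsilon:\nabla(w^i_{\eta,\varepsilon}\phi)$ scale like $\sigma_\varepsilon$ (using $\|\Psi_\varepsilon\|\lesssim\sigma_\varepsilon^2$ against $\|\nabla w^i_{\eta,\varepsilon}\|\lesssim\sigma_\varepsilon^{-1}$ and $\|w^i_{\eta,\varepsilon}\|\lesssim1$); the leftover viscous pieces vanish because $q^i_{\eta,\varepsilon}/\varepsilon\sim\sigma_\varepsilon^{-1}(c_\eta^{-1}q^i_{\eta,\varepsilon})$ and $\nabla w^i_{\eta,\varepsilon}$ pair against $\UU_\varepsilon\sim\sigma_\varepsilon^2$, while $\nabla\UU_\varepsilon\sim\sigma_\varepsilon$ kills $\mu\int\nabla\UU_\varepsilon:(w^i_{\eta,\varepsilon}\otimes\nabla\phi)$. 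The force and pressure terms survive: the strong convergence $\widetilde\FF_\varepsilon\to\FF$ in $L^2$ against $w^i_{\eta,\varepsilon}\rightharpoonup\overline w^i$ gives $\int\FF\cdot(\overline w^i\phi)$, and for the pressure only $\widetilde P^{(1)}_\varepsilon$ matters (the other blocks carry $\sigma_\varepsilon^{1/2},\sigma_\varepsilon^2$), yielding after integration by parts (using $\dive\overline w^i=0$) the term $\int\nabla P\cdot(\overline w^i\phi)$. Recognizing $(\overline w^i)_j=A_{i,j}=A_{j,i}$ turns the resulting identity $\mu\int\UU_i\phi=\int(\FF-\nabla P)\cdot\overline w^i\phi$ into $\mu\UU=A(\FF-\nabla P)$ in $\mathcal D'(\Omega\times(0,T))$, which is \eqref{equa-U-P-large}.

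The main obstacle is the pressure term. Unlike the small-hole setting of Proposition~\ref{thm-UP-crit}, here $\|\nabla w^i_{\eta,\varepsilon}\|_{L^2}\sim\sigma_\varepsilon^{-1}\to\infty$, so $w^i_{\eta,\varepsilon}\to\overline w^i$ only \emph{weakly} in $L^2$, not strongly. Passing to the limit in $\int\widetilde P^{(1)}_\varepsilon\,(w^i_{\eta,\varepsilon}\cdot\nabla\phi)$ is then a product of two merely weakly convergent factors, which is not automatically continuous. The remedy is to upgrade $\widetilde P^{(1)}_\varepsilon$ to strong convergence: the bound $\|\widetilde P^{(1)}_\varepsilon\|_{W^{1,2}(0,T;W^{1,2}(\Omega))}\le C$ from \eqref{est-large-P-2}, the compact embedding $W^{1,2}(\Omega)\hookrightarrow\hookrightarrow L^2(\Omega)$, and an Aubin--Lions argument in time give $\widetilde P^{(1)}_\varepsilon\to P$ strongly in $L^2(0,T;L^2(\Omega))$, which then pairs against the weak $L^2$ limit of $w^i_{\eta,\varepsilon}$. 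A secondary technical point is that the cell solution is genuinely $\varepsilon Q_0$-periodic only over the interior cells $K_\varepsilon$; since $\phi$ has compact support in $\Omega$, this mismatch near $\partial\Omega$ is harmless for small $\varepsilon$, but it must be noted when justifying the integration by parts in the cell-problem duality.
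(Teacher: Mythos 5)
Your proposal is correct and follows essentially the same route as the paper: testing the time-integrated equation with $w^i_{\eta,\varepsilon}\phi$, converting the viscous term into the Darcy term $\mu\sigma_\varepsilon^{-2}\int(\widetilde\UU_\varepsilon)_i\phi$ via duality with the scaled cell problem (your symmetry identity is algebraically the same as the paper's rewriting of $\nabla\widetilde\UU_\varepsilon:\nabla w^i_{\eta,\varepsilon}\phi$ through $\nabla(\phi\widetilde\UU_\varepsilon):\nabla w^i_{\eta,\varepsilon}$), estimating all remainders with the same $\sigma_\varepsilon$-scalings, and upgrading $\widetilde P^{(1)}_\varepsilon$ to strong $L^2L^2$ convergence by compact embedding before pairing it with the weakly convergent $w^i_{\eta,\varepsilon}$. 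Your two flagged subtleties (the weak-weak pairing in the pressure term and the harmlessness of the boundary cells due to $\operatorname{supp}\phi\subset\subset\Omega$) are exactly the points the paper relies on, handled in the same way.
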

\begin{proof}
The  convergence of $\widetilde \UU_{\e}$ in \eqref{conv-large-U} follows immediately from the uniform estimates \eqref{3-estimate-U} and was given already in \eqref{UUe-conv-large}.  The weak convergence of $\widetilde P_{\e}$ in \eqref{conv-large-P-2}  follows from the uniform estimates \eqref{est-large-P-2}.  The relations in \eqref{U-F} follows from the uniqueness fo weak limits and were given already in \eqref{Fe-F-st} and \eqref{U-u-large}.

Given any scalar test function $\phi \in C_{c}^{\infty}(\Omega \times (0,T) )$, we choose $w ^i_{\eta,\varepsilon}\phi$ as a test function to \eqref{2-equa-large-holes} and obtain
\ba\label{weak-equa-large}
&\int_0^T \int_\Omega  \sigma_\e^2\widetilde \uu_\varepsilon  \cdot (w_{\eta, \varepsilon}^i\phi)
-\widetilde \Psi _\varepsilon:\nabla (w_{\eta, \varepsilon}^i\phi)
+\mu \nabla \UU_\varepsilon :\nabla (w_{\eta ,\varepsilon }^i\phi)
-\widetilde  P_\varepsilon\dive (w_{\eta, \varepsilon}^i\phi )\dd x\dd t \\
&=\int_0^T \int_\Omega  \FF_\e \cdot (w_{\eta,\varepsilon}^i\phi )\dd x\dd t+\sigma_\e^2 \int_0^T  \int_\Omega \widetilde \uu_\e^0  \cdot (w_{\eta,\varepsilon}^i\phi )   \,\dd x\dd t.
\ea
By the uniform estimates in \eqref{3-est-u} and \eqref{lem-cell-2}, and the convergences in \eqref{cell-3} and \eqref{Fe-F-st}, we have
\ba\label{conv-large-1}
\left| \sigma_\e^2\int_0^T \int_\Omega  \widetilde \uu_\varepsilon  \cdot (w_{\eta,\varepsilon}^i\phi)\dd x\dd t \right|
&\leqslant \sigma_\e^2\|  \widetilde \uu_\varepsilon \| _{L^2 L^6}\| w_{\eta, \varepsilon}^i\| _{L^2(\Omega)}
\|  \phi \| _{L^2 L^3}  \leqslant C\sigma _\varepsilon^3 \to 0,\\
\left| \sigma_\e^2\int_0^T \int_\Omega  \uu^0_\e \cdot (w_{\eta,\varepsilon}^i\phi )\dd x\dd t \right|
& \leqslant C \sigma_\e^2   ||\uu^0||_{L^2(\Omega)} ||w_{\eta,\varepsilon}^i||_{L^2(\Omega)}\| \phi \| _{L^1 L^\infty}  \leqslant C\sigma _\varepsilon^2 \to 0 \to 0,\\
\int_0^T \int_\Omega  \widetilde \FF_\e \cdot (w_{\eta,\varepsilon}^i\phi) \dd x\dd t
& \to \int_0^T \int_\Omega  \FF \cdot ({\overline w^i}\phi) \dd x\dd t.
\ea

For the convective term, by \eqref{lem-cell-2} and  \eqref{3-est-u}, we have
\ba\label{conv-large-4}
&\left| \int_0^T \int_\Omega  \widetilde \Psi _\varepsilon:\nabla (w_{\eta, \varepsilon}^i\phi) \dd x\dd t \right| \leqslant \| \widetilde \Psi_{\e} \|_{L^{\infty} L^{2}}\| w_{\eta, \varepsilon}^i\| _{W^{1,2}}\| \phi \| _{W^{1,\infty}}\\
&  \leqslant \| \uue\otimes \uue\| _{L^1 L^2}\| w_{\eta, \varepsilon}^i\| _{W^{1,2}}\| \phi \| _{W^{1,\infty}}\leqslant  C\| \uue\| _{L^2 L^6}^{2}\| w_{\eta, \varepsilon}^i\| _{W^{1,2}} \leqslant C \sigma_{\e} \to 0.
\ea

Using equation \eqref{cell-general} in $(w_{\eta,\varepsilon}^i,q_{\eta,\varepsilon}^i)$ in Section \ref{sec:cell} gives us
\ba\label{conv-large-2}
&\int_0^T \int_\Omega  \nabla \widetilde \UU_\varepsilon :\nabla (w_{\eta, \varepsilon}^i\phi )\dd x\dd t
= \int_0^T \int_\Omega  \nabla \widetilde \UU_\varepsilon :\nabla \phi  \otimes w_{\eta,\varepsilon}^i
+ \nabla \widetilde \UU_\varepsilon :\nabla w_{\eta,\varepsilon}^i\phi \dd x\dd t\\
&= \int_0^T \int_\Omega  \nabla \widetilde \UU_\varepsilon  :\nabla \phi  \otimes w_{\eta,\varepsilon}^i
+ \nabla(\phi \widetilde \UU_\varepsilon ):\nabla w_{\eta,\varepsilon}^i
- \nabla \phi  \otimes \widetilde \UU_\varepsilon :\nabla w_{\eta,\varepsilon}^i \dd x\dd t\\
&= \int_0^T \int_\Omega  \nabla \widetilde \UU_\varepsilon :\nabla \phi  \otimes w_{\eta,\varepsilon}^i
+ \varepsilon^{-1} \dive  (\phi \widetilde \UU_\varepsilon) q_{\eta,\varepsilon}^i
+\sigma_\varepsilon^{-2}(\phi \widetilde \UU_\varepsilon) \cdot e^i
- \nabla \phi  \otimes \widetilde \UU_\varepsilon :\nabla w_{\eta,\varepsilon}^i\dd x\dd t.
\ea
For the first term and the last term on the right-hand side of \eqref{conv-large-2}, we have
\ba\nonumber
\left| \int_0^T \int_\Omega  \nabla \widetilde \UU_\varepsilon :\nabla \phi  \otimes w_{\eta,\varepsilon}^i\dd x\dd t \right|
& \leqslant  \| \nabla \widetilde \UU_\varepsilon  \|_{L^2 L^2}
\|  w_{\eta,\varepsilon}^i\| _{L^2 (\Omega)}
 \| \nabla \phi   \|_{L^2 L^\infty(\Omega)} \leqslant C{\sigma_\varepsilon} \to 0, \\
\left| \int_0^T \int_\Omega \nabla \phi  \otimes \widetilde \UU_\varepsilon :\nabla w_{\eta,\varepsilon}^i \dd x\dd t \right|&
\leqslant\| \widetilde \UU_\varepsilon \| _{L^2 L^2}
\|  \nabla w_{\eta,\varepsilon}^i\| _{L^2(\Omega)}\| \nabla \phi \| _{L^2 L^\infty(\Omega)}\leqslant C{\sigma_\varepsilon} \to 0.
\ea
Using $\dive  \widetilde \UU_\varepsilon = 0$ and $\varepsilon^{-1} c_\eta= \sigma_\varepsilon^{-1}$ implies
\ba\label{conv-large-6}
\left| \int_0^T \int_\Omega  \varepsilon ^{-1}\dive (\phi \widetilde \UU_\varepsilon )q_{\eta,\varepsilon}^i
\dd x\dd t \right| &\leqslant \varepsilon ^{-1}\|  \nabla \phi \| _{L^2 L^\infty (\Omega)}\|  \widetilde \UU_\varepsilon \| _{L^2 L^2}\| q_{\eta,\varepsilon}^i\| _{L^2(\Omega)}\\
&\leqslant C \varepsilon ^{-1}\sigma _\varepsilon ^2 c_\eta  = C\sigma _\varepsilon \to 0.
\ea
By the weak convergence of $\sigma_\e^{-2} \widetilde \UU_\e$ in \eqref{UUe-conv-large}, we have
\ba\label{conv-large-7}
\int_0^T \int_\Omega \sigma_\e^{-2} \phi \widetilde \UU_\varepsilon\cdot {e^i} \dd x\dd t \to \int_0^T \int_\Omega  \phi \UU \cdot {e^i} \dd x\dd t.
\ea

Now we deal with the term related to the pressure. By the decomposition in \eqref{large-F-P-1}--\eqref{est-large-P-2} we have
\ba\label{large-P-1}
& \int_0^T \int_\Omega \widetilde P_\varepsilon \dive (w_{\eta, \varepsilon}^i \phi ) \dd x\dd t
=\int_0^T \int_\Omega \widetilde P_\varepsilon^{(1)} w_{\eta, \varepsilon}^i \cdot \nabla \phi \dd x\dd t \\
&\qquad +\sigma_\e^{\frac{1}{2}}\int_0^T \int_\Omega  \widetilde P_\varepsilon^{(2)} w_{\eta, \varepsilon}^i  \cdot \nabla \phi  \dd x\dd t +\sigma_\e^2 \int_0^T \int_\Omega  \widetilde P_\varepsilon^{(3)} w_{\eta, \varepsilon}^i  \cdot \nabla \phi  \dd x\dd t.
\ea
By the uniform estimates in \eqref{est-large-P-2}, there holds
\ba\label{large-P-2}
\sigma_\e^{\frac{1}{2}}\int_0^T \int_\Omega  \widetilde P_\varepsilon^{(2)} w_{\eta, \varepsilon}^i  \cdot \nabla \phi  \dd x\dd t
\leqslant  \sigma_\e^{\frac{1}{2}}\| \widetilde P_\varepsilon^{(2)}\| _{L^{\frac43} L^2}
\|  w_{\eta, \varepsilon}^i\| _{L^2}\| \nabla \phi\| _{L^{4} L^\infty}
\leqslant C\sigma_\e^{\frac{1}{2}} \to 0, \\
\sigma_\e^2 \int_0^T \int_\Omega  \widetilde P_\varepsilon^{(3)} w_{\eta, \varepsilon}^i  \cdot \nabla \phi  \dd x\dd t
\leqslant  \sigma_\e^2\| \widetilde P_\varepsilon^{(3)}\| _{L^2 L^2}
\|  w_{\eta, \varepsilon}^i\| _{L^2}\| \nabla \phi\| _{L^2 L^\infty}
\leqslant C\sigma_\e^2\to 0.
\ea
Since $\widetilde P_\varepsilon^{(1)}\to P$ weakly in $W^{1,2}(0,T;W^{1,2}(\Omega))$, by Sobolev compact embedding, one has $\widetilde P_\varepsilon^{(1)}\to P$ strongly in $L^2(0,T;L^2_0(\Omega))$. Thus,
\ba\label{large-P-3}
\int_0^T \int_\Omega \widetilde P_\varepsilon^{(1)} w_{\eta, \varepsilon}^i \cdot \nabla \phi \dd x\dd t
\to \int_0^T \int_\Omega P (\overline  w^i \cdot \nabla \phi) \dd x\dd t
=\int_0^T \int_\Omega  P \dive (\overline  w^i \phi ) \dd x\dd t.
\ea

From \eqref{large-P-1}--\eqref{large-P-3} we deduce that
\ba\label{conv-large-8}
\int_0^T \int_\Omega  \widetilde P_\varepsilon \dive  (w_{\eta,\varepsilon}^i\phi ) \dd x\dd t
\to \int_0^T \int_\Omega  P{\dive }(\overline w ^i\phi ) \dd x\dd t.
\ea

By \eqref{conv-large-1}--\eqref{conv-large-8}, passing $\e\to 0$ in \eqref{weak-equa-large} implies our desired equation \eqref{equa-U-P-large}.  The proof of Proposition \ref{prop-UP-large} is completed.

\end{proof}

\subsection{End of the proof}
By Proposition \ref{prop-UP-large}, differentiating in $t$ to equation \eqref{equa-U-P-large} implies
\ba\nn
\mu \uu = A({\ff} - \nabla p), \ \mbox{in} \ \mathcal{D}'(\Omega \times (0,T)),
\ea
with $p = \p_{t} P$.  By \eqref{est-large-P-2}, \eqref{conv-large-P-1} and \eqref{conv-large-P-2}, we know that
\ba\nn
\widetilde p_\varepsilon=\widetilde p_\varepsilon^{(1)}+\sigma_\e^{\frac12}\widetilde p_\varepsilon^{(2)}+\sigma_\e^2\widetilde p_\varepsilon^{(3)},
\ea
with
\ba\nn
&\widetilde p_\varepsilon^{(1)}  \to p \ \mbox{weakly in} \ L^2 (0,T;W^{1,2}(\Omega)),\\
&\widetilde p_\varepsilon^{(2)} \ \mbox{bounded in} \ L^\frac{4}{3}(0,T;L_0^2(\Omega)),\\
&\widetilde p_\varepsilon^{(3)}  \ \mbox{bounded in} \ W^{-1,2} (0,T;L_0^2(\Omega)),
\ea
which are exactly \eqref{thm2-p-1} and \eqref{thm2-p-2}.

Since $\widetilde \uu_\varepsilon \in L^2(0,T;W_0^{1,2}(\Omega ))$, $\dive \widetilde \uu_\varepsilon=0$ and $ \sigma_\e^{-2} \widetilde \uu_\varepsilon \to \uu$ weakly in $L^2(0,T;L^2(\Omega))$, then $\dive \uu=0$ in $\Omega\times (0,T) $ and $\uu\cdot \mathbf{n}=0$ on $\partial \Omega \times (0,T)$ (see  \cite{ref20}). We thus complete the proof of  Theorem \ref{thm-3}.


\section*{Acknowledgments}

Yong Lu has been supported by the Recruitment Program of Global Experts of China. Both authors are partially supported by the  NSF of China under Grant 12171235. On behalf of all authors, the corresponding author states that there is no conflict of interest.


\begin{thebibliography}{99}
\bibitem{ref3}G. Allaire. Homogenization of the Navier-Stokes equations in open sets perforated with tiny holes. I. Abstract framework, a volume distribution of holes. \textit{Arch. Ration. Mech. Anal.},113 (3) (1990), 209-259.
\bibitem{ref4}G. Allaire. Homogenization of the Navier-Stokes equations in open sets perforated with tiny holes. II. Noncritical sizes of the holes for a volume distribution and a surface distribution of holes. \textit{Arch. Ration. Mech. Anal.}, 113 (3) (1990), 261-298.
\bibitem{Allaire91} G. Allaire. Continuity of the Darcy's law in the low-volume fraction limit. {\em Ann. Scuola Norm. Sup. Pisa Cl. Sci.},  4 (1991),  475-499.

\bibitem{CM82} D. Cioranescu, F. Murat. Un terme \'etrange venu d'ailleurs, Nonlinear Partial Differential Equations and their Applications, Coll\`ege de France Seminar, Vols. 2 \& 3, ed. by H. Brezis \& J. L. Lions, Research Notes in Mathematics 60, pp. 98-138, and 70, pp. 154-178, Pitman, London (1982).
\bibitem{ref15}L. Diening, E. Feireisl, Y. Lu. The inverse of the divergence operator on perforated domains with applications to homogenization problems for the compressible Navier-Stokes system. \textit{ESAIM: Control Optim. Calc. Var.,} 23 (2017), 851-868.
\bibitem{ref18}J. Leray.  Essai sur le mouvement d'un liquide visqueux emplissant 1'espace. \textit{Acta Math.,} 63 (1934), p. 193-248.
\bibitem{ref13}E. Feireisl, A. Novotn\'y, T. Takahashi. Homogenization and singular limits for the complete Navier-Stokes Fourier system.\textit{J. Math. Pures Appl}., 94 (1) (2010), 33-57.
\bibitem{ref10}E. Feireisl, Y. Lu. Homogenization of stationary Navier-Stokes equations in domains with tiny holes. \textit{J. Math. Fluid Mech.}, 17 (2015), 381-392.
\bibitem{ref14}E. Feireisl, Y. Namlyeyeva, S. Ne\v{c}asov\'a. Homogenization of the evolutionary Navier-Stokes system. \textit{Manusc. Math.,} 149 (2016), 251-274.
\bibitem{ref8}G. P. Galdi. An introduction to the mathematical theory of the Navier-Stokes equations: Steady-state problems. Springer Science and Business Media, 2011.
\bibitem{ref19}W. J. Jing. A Unified Homogenization Approach for the Dirichlet Problem in Perforated Domains. \textit{SIAM: J. Math. Anal.}, 52 (2) (2020), 1192-1220.
\bibitem{ref6}P. L. Lions.  Mathematical Topics in Fluid Dynamics, Vol. I. Incompressible Models, Oxford Science Publication, Oxford, 1996.
\bibitem{ref6.1}P. L. Lions.  Mathematical Topics in Fluid Dynamics, Vol. II. Compressible models. Oxford Science Publication, Oxford, 1998.
\bibitem{ref1}Y. Lu. Homogenization of Stokes Equations in Perforated Domains: A Unified Approach. \textit{J. Math. Fluid Mech.},  22 (2020), Paper No. 44.
\bibitem{ref9}Y. Lu, S. Schwarzacher. Homogenization of the compressible Navier-Stokes equations in domains with very tiny holes. \textit{J. Differential Equations}, 265 (4) (2018), 1371-1406.
\bibitem{ref12}N. Masmoudi. Homogenization of the compressible Navier-Stokes equations in a porous medium.  \textit{ESAIM: Control Optim. Calc. Var.}, 8 (2002), 885-906.
\bibitem{ref2}A. Mikeli\'c. Homogenization of nonstationary Navier Stokes equations in a domain with a grained boundary. \textit{Ann.Mat.Pura Appl.}, 158 (1991), 167-179.
\bibitem{ref20}E. Sanchez-Palencia. Non homogeneous Media and Vibration theory, \textit{Lecture Notes in Physics} 127, Springer-Verlag, 1980.
\bibitem{ref7}L. Tartar. Incompressible fluid flow in a porous medium: convergence of the homogenization process, in \textit{Nonhomogeneous media and vibration theory}, edited by E. S\'anchez-Palencia, 1980, 368-377.
\bibitem{ref5}R. Temam. Navier-Stokes equations, North-Holland, Amsterdam, 1979.
\end{thebibliography}
\end{document}